\documentclass[12pt,a4paper]{amsart}
\usepackage{amsfonts,amssymb,latexsym,amsmath, amsxtra, eufrak}
\usepackage[all]{xy}
\usepackage{graphics,graphicx,epsfig}

\pagestyle{myheadings}

\textheight=8.5 true in \textwidth=6.5 true in \hoffset=-0.8true in

\usepackage[OT2,T1]{fontenc}
\DeclareSymbolFont{cyrletters}{OT2}{wncyr}{m}{n}
\DeclareMathSymbol{\Sha}{\mathalpha}{cyrletters}{"58}

\theoremstyle{plain}
\newtheorem{theorem}{Theorem}[section]

\newtheorem{proposition}[theorem]{Proposition}

\newtheorem*{conjecture*}{Conjecture}

\newtheorem{definition}[theorem]{Definition}

\numberwithin{equation}{section}

\let\non\nonumber

\newcommand{\bea}{\begin{eqnarray}}
\newcommand{\eea}{\end{eqnarray}}
\newcommand{\be}{\begin{equation}}
\newcommand{\ee}{\end{equation}}

\newcommand{\sgn}{\mathrm{sgn}}

\newcommand{\noi}{\noindent}

\newcommand{\im}{\mathrm{Im}}

\newcommand{\bP}{\mathbb{P}}

\newcommand{\CI}{\mathcal{I}} 
\newcommand{\CM}{\mathcal{M}}
\newcommand{\CN}{\mathcal{N}}
\newcommand{\CO}{\mathcal{O}}
\newcommand{\CS}{\mathcal{S}}
\newcommand{\CZ}{\mathcal{Z}}

\newcommand{\bs}{\boldsymbol}



\begin{document}

\title[Sheaves on $\bP^2$ and generalized Appell functions]{Sheaves on $\bP^2$ and generalized Appell functions}

\author{Jan Manschot}
\address{Institut Camille Jordan\\  
Universit\'e Claude Bernard Lyon 1\\
43 boulevard du 11 novembre 1918\\
69622 Villeurbanne cedex\\ France \vspace{.2cm} \newline 
Current address: School of Mathematics, Trinity College, College Green, Dublin 2, Ireland}

\vskip 2 cm
 
\begin{abstract}
\baselineskip=18pt
\noi 
A closed expression is given for the generating function of (virtual) Poincar\'e polynomials of moduli spaces of semi-stable sheaves on the projective plane $\bP^2$ with arbitrary rank $r$ and Chern classes. This generating function is known to equal the partition function of topologically twisted gauge theory with $\CN=4$ supersymmetry and gauge group $U(r)$, which localizes  on the Hermitian Yang-Mills solutions of the gauge field. To classify and study the novel generating functions, the notion of Appell functions with signature $(n_+,n_-)$ is introduced. For $n_-=1$, these novel functions reduce to the known class of Appell functions with multiple variables or higher level. 
\end{abstract}
\maketitle

\baselineskip=19pt

\tableofcontents

\vspace{0.5cm}

\section{Introduction}
Moduli spaces and their topological invariants are of fundamental interest for mathematics and physics. The Donaldson-Uhlenbeck-Yau theorem \cite{Donaldson:1987, Uhlenbeck:1986} rigorously establishes the correspondence between moduli spaces of instanton solutions in Yang-Mills theory and moduli spaces of semi-stable vector bundles and sheaves. Much progress is made in recent years on the properties and computation of topological invariants of these moduli spaces for rational and ruled algebraic surfaces. Among the used techniques for this progress are wall-crossing \cite{Klemm:2012sx,Manschot:2010nc,
  Manschot:2011ym, Yoshioka:1994, Yoshioka:1995, Yoshioka:1996}, toric localization in moduli spaces \cite{Kool:2009, weist:2009}, and the Hall algebra \cite{Mozgovoy:2013zqx}.  
 
Generating functions of topological
invariants of moduli spaces exhibit often interesting arithmetic and
modular properties. On the arithmetic side, the topological invariants are known to equal 
counts of (colored) partitions for rank 1 sheaves \cite{Gottsche:1990}, dimensions of representations of the Mathieu group for the K3 surface \cite{Cheng:2012uy, Katz:2014}, and class numbers for rank 2 sheaves on the projective plane $\bP^2$ \cite{Klyachko:1991, Vafa:1994tf, Zagier:1975}. 
The appearance of modularity is understood physically by the relation to gauge theory, and the $SL(2,\mathbb{Z})$ electric-magnetic duality group of this theory. The path integral of topologically twisted $\CN=4$ supersymmetric Yang-Mills theory with gauge group $U(r)$ (also known as Vafa-Witten partition function) can
be shown to enumerate basic topological invariants as the Euler number and Poincar\'e polynomial of the moduli spaces of
vector bundles of rank $r$ \cite{Vafa:1994tf}. The electric-magnetic duality group then implies 
modular  transformation properties for the generating functions of these invariants.
 
The class of rational and ruled surfaces allows to study the generating functions very explicitly, and in particular their dependence on the polarization or K\"ahler 2-form $J$. In the limit of vanishing volume of the base of the ruled surface, the generating functions of the topological invariants take the form of a beautiful infinite product formula \cite{Manschot:2011ym, Mozgovoy:2013zqx} which transforms as a Jacobi form \cite{Eichler:1985} under modular transformations. Application of wall-crossing formulas \cite{Joyce:2004, Joyce:2008, Kontsevich:2008} allows to determine the invariants for other choices of the polarization
$J$. The change of polarization is taken into account by so-called indefinite theta functions \cite{Gottsche:1996, Manschot:2010nc, Zwegers:2000}. These are convergent and holomorphic sums over a subset of an indefinite lattice of signature $(r-1,r-1)$. They typically destroy the nice modular transformation properties. However using the theory of mock modular forms \cite{Zwegers:2010}, one can add a specific non-holomorphic completion such that the modular properties are restored for rank 2 \cite{ Manschot:2011dj, Vafa:1994tf}.  The non-holomorphic completion is however not very well understood from gauge theory, and the holomorphic anomaly equation is only conjectured for $r>2$ \cite{Minahan:1998vr}.  
    
In this brief note, we derive a closed expression for the generating functions for arbitrary rank $r$ and Chern classes. This closed form is given by Equation (\ref{eq:Hrab}) together with Proposition \ref{prop:HrbCaf}. For rank 3, the function simplifies considerably the expression given in \cite{Manschot:2010nc}, and it also allows to relatively quickly determine invariants for $r>3$. The key to this simplification is the fact that the wall-crossing formula of Joyce \cite{Joyce:2004} for virtual Poincar\'e polynomials is very suitable for application in generating functions.\footnote{Toda \cite{Toda:2014} pointed out recently that application of the wall-crossing formula of Joyce for numerical invariants (Euler numbers) \cite{Joyce:2008} is compatable with the theory of indefinite theta functions.} For $r=2$, we find immediately the familiar Appell functions \cite{ Manschot:2011dj, Yoshioka:1994}. To describe the functions for $r\geq 3$, we generalize the notion of the classical Appell function (\ref{eq:appell}). The generalized Appell functions (\ref{eq:genAppell}) are characterized by their signature $(n_+,n_-)$. The functions with signature $(n_+,1)$ reduce to the multi-variable and higher level Appell functions previously described in the literature \cite{Semikhatov:2003uc, Zwegers:2010}.  The novel form of the generating functions is much more suitable for the study of their arithmetic and modular properties than the form in \cite{Manschot:2010nc, Manschot:2011dj}. The properties are currently being determined \cite{work}.  
 
Appell functions have by now a wide variety of applications in number theory, algebraic geometry and mathematical physics \cite{ Cheng:2012uy, Dabholkar:2012nd, Eguchi:1988af, Kac:2001, Manschot:2007ha, Manschot:2009ia, Polishchuk:1998, Semikhatov:2003uc,  Troost:2010ud, Vafa:1994tf, Zwegers:2000}. The functions found here for $\CN=4$ Yang-Mills theory have even more subtle transformation properties, which are also likely to appear in other contexts. We mention only a few here: 
\begin{itemize} 
\item The partition function of Yang-Mills theory on a complex surface is a very useful  model 
  for the more difficult problem of determining partition functions for D4-D2-D0 branes in string theory supported on divisors
  in Calabi-Yau three-folds \cite{Maldacena:1997de}. This problem is relevant for describing quantum black holes in four-dimensional $\CN=2$ supergravity.
 Besides the application to the enumerative geometry of Calabi-Yau three-folds and black hole physics, the modular properties of the generating functions are also important to understand S-duality of
  IIB string theory  \cite{Alexandrov:2012au}. The period integrals which
  appear in the modular completion of the generating function and which render the partition 
  function continuous as function of the stability parameters are expected to be related to twistor integrals occuring in the Darboux coordinates.
  Recently it was proposed that these integrals also occur in the multi-particle Witten index for $\CN=2$ supersymmetric theories in $\mathbb{R}^{3,1}$ \cite{Alexandrov:2014wca}.    
\item Another interesting aspect of the generating functions is that they are expected to appear as partition functions of two dimensional theories. 
 The 6-dimensional M5-brane of M-theory relates $\CN=4$ Yang-Mills to a 2-dimensional field theory on an
  elliptic curve \cite{Gadde:2013sca, Haghighat:2011xx, Maldacena:1997de, Minasian:1999qn, Vafa:1994tf}. It would be interesting if the generalized Appell functions with 
  signature $(n_+,n_-)$ could be derived from this point of view.
\item Application of the wall-crossing for virtual Poincar\'e polynomials (\ref{eq:chCI}) will also simplify the analysis for other complex surfaces and give in this way more examples of the generalized Appell functions. A particularly interesting surface is $\frac{1}{2}$K3 (the rational elliptic surface), for which the Vafa-Witten partition function equals (for suitable $J$) the partition function of topological strings \cite{Minahan:1998vr}. 
\item The usual Appell functions are known to appear as global
  sections of rank 2 bundles on elliptic curves 
  \cite{Polishchuk:1998}. Properties of the Appell functions can be
  understood as $A_\infty$ constraints of the Fukaya category of
  the elliptic curve. It would be interesting to explore whether
  generalized Appell functions have similarly an interpretation as
  global sections of rank $r>2$ bundles on elliptic curves.   
\end{itemize}

The outline of this note is as follows. We start in Section \ref{sec:expaction} by writing the physical action for Hermitean Yang-Mills connections in terms of characteristic classes of the corresponding bundles. Section \ref{sec:change} briefly reviews topological invariants of moduli spaces of semi-stable sheaves and the wall-crossing formula for
virtual Poincar\'e polynomials. Section \ref{sec:genfunctions} defines the generating functions of virtual Poincar\'e polynomials and derives a closed expression for the rational surfaces $\bP^2$ and $\Sigma_1$ and arbitrary rank of the sheaves. In Section 
\ref{sec:appell} we discuss the classical Appell function and generalize it to the larger class of Appell
functions with signature $(n_+,n_-)$. The generating functions of Section \ref{sec:genfunctions} can be expressed in terms of specializations of generalized Appell functions.

\section*{Acknowledgements}
I would like to thank Kathrin Bringmann, Babak Haghighat, Boris Pioline, Artan Shesmani, Cumrun Vafa, Don Zagier,
Miguel Zapata Rol\'on and Sander Zwegers for useful discussions. I am also grateful to the ENS Paris, the Amsterdam String Theory
Workshop 2014 and the Simons Workshop 2014 for their hospitality
during parts of this work.  

\section{The Yang-Mills action in terms of Chern classes}
\label{sec:expaction}
This section evaluates the Yang-Mills action for Hermitean Yang-Mills connections in terms of Chern classes. Section \ref{sec:genfunctions} will use the result to define generating functions for virtual Poincar\'e polynomials of moduli spaces of semi-stable sheaves. The classical Yang-Mills action is given by
\be
\label{action}
\mathcal{S}(A)=-\frac{1}{g^2}\int_S{\rm Tr}\, F\wedge *F +\frac{i\theta}{8\pi^2}\int_S{\rm Tr}\, F\wedge F,
\ee
where $F=dA+A\wedge A=\sum_{a=0}^{N^2-1} F^a t^a$  is the field strength with $t^a$ the Lie algebra generators of $u(N)$ in the adjoint representation. We let $t^0=\frac{1}{N} {\bf 1}_N$, with ${\bf 1}_N$ the $N$-dimensional identity matrix, and $t^a$, $a=1,\dots,N^2-1$, are representing the generators of $SU(N)$. The coupling constant $g$ and the $\theta$-angle are naturally combined in the complexified coupling constant $\tau=\frac{\theta}{2\pi}+\frac{4\pi i}{g^2}$ which takes values in the upper half plane $\mathbb{H}$. 

The partition function of topologically twisted $\mathcal{N}=4$ Yang-Mills theory localizes on the Hermitean Yang-Mills solutions \cite{Vafa:1994tf}. A Hermitean Yang-Mills solution $F$ is such that $F^0$ is a harmonic form and the $F^a$ are anti-self-dual $F^a=-* F^a$. This can also be formulated without reference to a basis of the Lie algebra.  To this end, let $J$ be the K\"ahler form, which is self-dual, $J=*J$, and of degree $(1,1)$. Its norm is positive, $\int_S J\wedge J>0$. Then a Hermitean Yang-Mills connections is a connection such that $\int_S F\wedge J$ is proportional to the identity matrix, and the components of $F$ with form degree $(2,0)$ and $(0,2)$ vanish, $F^{(0,2)}=F^{(2,0)}=0$.

The partition function $\CZ(\tau;J)$ of $\CN=4$ Yang-Mills theory takes then schematically the following form \cite{Vafa:1994tf}
$$
\CZ(\tau;J)=\sum_{k} \chi(\CM_k) \exp(-\CS(A)),
$$
where $k$ is the instanton number and $\chi(\CM(k))$ is the Euler number of the moduli space of instantons with instanton number $k$. Section \ref{sec:change} discusses in more detail the precise definition of $\chi(\CM(k))$. Here we continue with expressing $\exp(-\CS(A))$ in terms of algebraic-geometric data.

The Hitchin-Kobayashi correspondence relates Hermitean Yang-Mills connections to vector bundles. The Chern character $\gamma=(r,c_1,{\rm ch}_2)$ of the vector bundles is given in terms of Yang-Mills data by
$$
r=N, \qquad c_1={\rm ch}_1=\frac{i}{2\pi} {\rm Tr}\,F,\qquad
\frac{1}{2}c_1^2-c_2={\rm ch}_2=-\frac{1}{8\pi^2} {\rm Tr}\,F\wedge F. 
$$
This specificies the proportionality constant between $\int_S F\wedge J$ and ${\bf 1}_N$,
\be
\label{eq:FJ}
\int_S F\wedge J= \frac{2\pi\,{\bf 1}_N}{i\, r} \int_S c_1\wedge J. 
\ee

For the rational and ruled surfaces, the signature of $H^2(S,\mathbb{Z})$ equals $(1,b_2(S)-1)$ or equivalently $b_2^+=1$. Therefore, 2-forms orthogonal to $J$ are anti-self-dual and negative definite. As a result, we can express $*F$ in terms of $F$ and $J$
$$
*F=\frac{2\,F\cdot J\,J}{J^2}-F.
$$ 
This equation together with (\ref{eq:FJ}) gives for the kinetic term
\begin{eqnarray} 
\label{kinetic}
-\int_S {\rm Tr}\, F\wedge *F&=& 8\pi^2 \left( \frac{(c_1\cdot J)^2}{r J^2}  -{\rm ch}_2\right),  
\end{eqnarray}
which is the lower bound of the kinetic energy, given the topological Chern numbers of the Hermitean-Yang-Mills solution. Let $c_{1,\pm}$ be the projections of the $c_1$ to the positive and negative definite subspaces of $H^2(S,\mathbb{Z})$ defined by
\be
\label{eq:plusmin}
c_{1,+}=\frac{c_1\cdot J\,J}{J^2}, \qquad c_{1,-}=c_1-c_{1,+}.
\ee

Using these results, we can express the exponentiated action $\exp(-\mathcal{S}(A))$ in terms of the Chern character and discriminant of the bundle
\begin{eqnarray}
&&\exp\!\left(-2\pi\tau_2\left(-\mathrm{ch_2}+\frac{1}{r}(c_1)_+^2\right)+2\pi i\tau_1 \mathrm{ch}_2 \right)\\
&&=\exp\!\left(-2\pi \tau_2 \left(r\Delta + \frac{1}{2r}(c_1)_+^2 -\frac{1}{2r} (c_1)_-^2  \right)+2\pi i \tau_1\left(-r\Delta +\frac{1}{2r}(c_1)_+^2 +\frac{1}{2r} (c_1)_-^2  \right)            \right) \non \\ 
&& = q^{\frac{1}{2r}(c_1)_+^2}\bar q^{r\Delta-\frac{1}{2r}(c_1)_-^2}, \non
\end{eqnarray}
where $\Delta(\gamma)$ is the discriminant defined by 
\be 
\label{eq:discriminant} 
\Delta(\gamma)=\frac{1}{r}\left(c_2-\frac{r-1}{2r}c_1^2 \right) \in \mathbb{Q}.
\ee
The sum over all $c_1\in H^2(M,\mathbb{Z})$ gives a Siegel-Narain
theta function \cite{Verlinde:1995mz}. We will however fix $c_1$ such that we can drop the sum over $c_1$. 

The action of $\CN=4$ Yang-Mills theory on a general curved surface $S$ contains also curvature terms \cite{Bachas:1999um} in addition to the terms with $F$. They contribute to the action \cite{Bachas:1999um}
$$
-2\pi i \bar \tau \frac{r\chi(S)}{24},
$$
such that the exponential in the partition function becomes
\be 
\label{eq:exponent}
q^{\frac{1}{2r}(c_1)_+^2}\bar q^{r\Delta-\frac{r\chi(S)}{24}-\frac{1}{2r}(c_1)_-^2}.
\ee
 
\section{Semi-stable sheaves on rational surfaces and change of polarization}
\label{sec:change}
We start in this section with briefly recalling necessary ingredients of
semi-stable sheaves on rational surfaces and changes of the
polarization. Let $\gamma$ be the Chern character of a coherent
sheaf: $\gamma=(r,c_1,\mathrm{ch}_2)$. The polarization $J$ of an algebraic surface $S$ is an element of the
closure of the ample cone $C(S)$ of $S$. The polarization enters in
the definition of a stability condition $\varphi_J(\gamma)$ for coherent sheaves on $S$. 
The two relevant examples of stability conditions for this paper are $\mu$-stability, $\varphi^{\mu}_J(\gamma)=\mu(\gamma)\cdot J=c_1\cdot
J/r$,  and Gieseker stability with $\varphi^{\mathrm{Gi}}_J(\gamma)=p_J(\gamma)$
where $p_J(\gamma)$ is the Hilbert polynomial of the sheaf: 
\be
p_{J}(\gamma)=J^2/2+\left( \frac{c_1(F)\cdot J}{r(F)}-\frac{K_S\cdot
    J}{2}\right) +\frac{1}{r(F)}\left(\frac{c_1(F)^2-K_S\cdot c_1(F)}{2}-c_2(F) \right)+\chi(\CO_S). \non
\ee

Most of the discussion in this article considers generating functions of so-called
virtual Poincar\'e polynomials $\CI(\gamma,w;J)$ of the moduli stack
$\mathfrak{M}_J(\gamma)$ of sheaves which are $\mu$-semi-stable with respect to
$J$ \cite{Joyce:2004}. Their formal definition is rather abstract and
involved \cite{Joyce:2004}, however their change under variations of $J$
(wall-crossing formula) as well as their generating functions take a rather simple
form. They uniquely determine the integer invariants $\Omega(\gamma,w;J)$
which conjecturally enumerate quantum BPS states. 

To explain the connection, let $\bar \Omega(\gamma,w;J)$ be the rational BPS invariant defined in terms of $\mathcal{I}(\gamma,w;J)$ by the relation \cite{Joyce:2004}:
\be
\label{eq:bOmCI}
\bar \Omega(\gamma,w;J):=\sum_{\gamma_1+\dots+\gamma_\ell=\gamma \atop p_J(\gamma_i)=p_J(\gamma)\,\,\mathrm{for}\,\,i=1,\dots,\ell} \frac{(-1)^{\ell-1}}{\ell}  \prod_{i=1}^\ell \mathcal{I}(\gamma_i,w;J) 
\ee
with inverse:
\be
\label{eq:OmI}
\mathcal{I}(\gamma,w;J)=\sum_{\gamma_1+\dots+\gamma_\ell=\gamma \atop p_J(\gamma_i)=p_J(\gamma)\,\,\mathrm{for}\,\,i=1,\dots,\ell} \frac{1}{\ell!} \prod_{i=1}^\ell \bar \Omega(\gamma_i,w;J). \non 
\ee
At generic points of the polarization, away from walls of marginal stability and boundary points, the $\bar \Omega(\gamma,w;J)$ can be further related to Laurent polynomials $P(\gamma,w;J)\in \mathbb{Z}[w,w^{-1}]$, which are symmetric under $w \leftrightarrow w^{-1}$. To this end, define $\Omega(\gamma,w;J)$ by:
\be
\label{eq:bOmtoOm}
\Omega(\gamma,w;J):=\sum_{m|\gamma}\frac{\mu(m)}{m} \, \bar \Omega(\gamma/m, -(-w)^{m};J), 
\ee
where $\mu(m)$ is the arithmetic M\"obius function. Equation (\ref{eq:bOmtoOm}) has inverse:
\be
\bar \Omega(\gamma,w;J)=\sum_{m|\gamma} \,\frac{\Omega(\gamma/m,-(-w)^m;J)}{m} \non
\ee
Then away from walls of marginal stability $\Omega(\gamma,w;J)$ takes the form:
\be
\label{eq:limitnum}
\Omega(\gamma,w;J)=\frac{P(\gamma,w;J)}{w-w^{-1}}, \non 
\ee
where $P(\gamma,w;J)$ is a Laurent polynomial symmetric under $w\leftrightarrow w^{-1}$. 
The integer BPS invariants $\Omega(\gamma;J)$ are obtained from these
by 
\be
\Omega(\gamma;J)=\lim_{w\to -1} (w-w^{-1})\,\Omega(\gamma,w;J)
\ee
 and similarly for the rational numerical invariants $\bar \Omega(\gamma;J)$.

If $\gamma$ is primitive and semi-stability implies stability than the
moduli space $\CM_J(\gamma)$ is smooth and compact and
$w^{\dim_\mathbb{C}\CM_J(\gamma)}P(\gamma,w;J)$ equals the Poincar\'e
polynomial
$\sum_{\ell=0}^{2\dim_\mathbb{C}\mathcal{M}_J(\gamma)}b_\ell(\mathcal{M}_J(\gamma))w^\ell$
of $\mathcal{M}_J(\gamma)$. If semi-stable does not imply stable, the
precise cohomological meaning of $P(\gamma,w;J)$ is not completely
clear. But following \cite{Yoshioka:1995} one expects that the Laurent polynomial gives dimensions of
intersection cohomology groups.  

To state the change of $\CI(\gamma,w;J)$ under wall-crossing, we define the function $S(\{\gamma_i\},\varphi,J,J')$ as in \cite[Definition
4.2]{Joyce:2004}.  
\begin{definition}
\label{def:S}
Let $\{\gamma_1,\gamma_2,\dots,
\gamma_\ell\}$ be a set of Chern characters with $r_i\in \mathbb{N}^*$, $i=1,\dots,\ell$. If for all $i=1,\dots, \ell-1$ we have either
\begin{enumerate}
\item[(a)] $\varphi_J(\gamma_i) \leq \varphi_J(\gamma_{i+1})$ and
  $\varphi_{J'}(\sum_{j=1}^i \gamma_j) > \varphi_{J'}(
  \sum_{j=i+1}^\ell\gamma_{j})$, or
\item[(b)] $\varphi_J(\gamma_i) > \varphi_{J}(\gamma_{i+1})$ and
  $\varphi_{J'}(\sum_{j=1}^i \gamma_j)\leq \varphi_{J'}(
  \sum_{j=i+1}^\ell\gamma_{j}),$
\end{enumerate}
then define $S(\{\gamma_i\},\varphi_J,\varphi_{J'})=(-1)^k$ where $k$ is the number of
$i=1,\dots,\ell-1$ satisfying (a). Otherwise,
$S(\{\gamma_i\},\varphi_J,\varphi_{J'})=0$. 
\end{definition}
\noindent For Gieseker stability $\varphi^{\rm Gi}_J$, the orderings $\leq$ and $>$ in Definition \ref{def:S} are to be replaced by the lexicographic ordering, 
$\preceq$ and $\succ$, respectively. In the following, we will
consider mostly $\mu$-stability and we therefore shorten notation by defining
$S(\{\gamma_i\},J,J'):=S(\{\gamma_i\},\varphi^\mu_J,\varphi^\mu_{J'})$.

Ref. \cite{Joyce:2004} shows that for surfaces whose anti-canonical
class $-K_S$ is numerically effective, the change of the invariants
$\CI(\gamma,w;J)$ under wall-crossing is expressed in 
terms of $S(\{\gamma_i\},J,J')$ as \cite[Theorem 6.21]{Joyce:2004}:

\begin{theorem}
\label{the:chCI}
Under a change of polarization $J\to J'$, the invariants
$\CI(\gamma,w;J')$ are expressed in terms of $\CI(\gamma,w;J)$ by:
\be
\label{eq:chCI}
\CI(\gamma,w;J')=\sum_{\sum_{i=1}^\ell \gamma_i=\gamma, \atop r_i \geq
  1, i=1,\dots, \ell}
S(\{\gamma_i\},J,J')\,w^{-\sum_{i<j}r_ir_j(\mu_j-\mu_i)\cdot
  K_S}\prod_{i=1}^\ell \CI(\gamma_i,w;J). 
\ee
\end{theorem}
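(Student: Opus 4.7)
My plan is to realise this formula as the image of a universal Hall-algebra identity under a motivic Poincar\'e-polynomial morphism, following Joyce's program. First, I would work in the Ringel--Hall-type algebra of stack functions on the moduli stack of all coherent sheaves of Chern character $\gamma$, in which the product encodes extensions. For each polarisation one has the characteristic stack function $\delta_J^{\mathrm{ss}}(\gamma)$ of the $\mu$-semi-stable locus, and the Harder--Narasimhan filtration expresses the characteristic function of the full stack as an iterated Hall product of the $\delta_J^{\mathrm{ss}}(\gamma_i)$ over decompositions with strictly decreasing $\mu_J$-slopes. Inverting this identity and substituting the analogous one for $J'$ writes $\delta_{J'}^{\mathrm{ss}}(\gamma)$ as a signed sum, over ordered decompositions $\gamma=\gamma_1+\cdots+\gamma_\ell$, of Hall products of $\delta_J^{\mathrm{ss}}(\gamma_i)$ with a combinatorial coefficient $c(\{\gamma_i\},J,J')$ depending only on the two slope orderings of the $\gamma_i$ and of their partial sums.

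The main step is to identify $c(\{\gamma_i\},J,J')$ with $S(\{\gamma_i\},J,J')$ from Definition \ref{def:S}. Conditions (a) and (b) there are tailored precisely so that the alternating sum produced by the double HN-refinement inversion telescopes to the prescribed sign $(-1)^k$; verifying this requires an induction on $\ell$ together with careful bookkeeping of which partial-sum slope inequalities flip as one passes from $J$ to $J'$. I expect this combinatorial identity to be the main obstacle, and it is essentially the heart of Joyce's analysis.

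Finally, I would apply the virtual Poincar\'e-polynomial integration $\delta_J^{\mathrm{ss}}(\gamma)\mapsto \CI(\gamma,w;J)$ to both sides. Joyce's Hall product is twisted by a power of $w$ whose exponent is the antisymmetrised Euler pairing $\chi(\gamma_j,\gamma_i)-\chi(\gamma_i,\gamma_j)$; by Hirzebruch--Riemann--Roch on the surface $S$, restricted to constituents of equal $\mu_J$-slope on the wall, this reduces to $-r_ir_j(\mu_j-\mu_i)\cdot K_S$, reproducing the exponent in \eqref{eq:chCI}. The hypothesis that $-K_S$ is numerically effective is used to guarantee the vanishing or controllability of higher-Ext contributions, so that the Hall-algebra identity descends to virtual Poincar\'e polynomials in the required multiplicative fashion. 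Together with the combinatorial matching above, this yields the stated wall-crossing formula.
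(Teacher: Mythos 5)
The first thing to note is that the paper contains no proof of Theorem \ref{the:chCI} to compare against: the result is imported verbatim from Joyce, cited as Theorem 6.21 of \cite{Joyce:2004}, and the surrounding text only sets up the coefficient $S(\{\gamma_i\},J,J')$ (Definition \ref{def:S}, which is Joyce's Definition 4.2) needed to state it. So the honest benchmark for your proposal is Joyce's own argument, not anything in this paper.

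Measured against that, your outline correctly identifies the architecture of Joyce's proof: Harder--Narasimhan inversion in a Ringel--Hall-type algebra of stack functions, identification of the resulting coefficient with $S(\{\gamma_i\},J,J')$, and push-forward along a twisted integration morphism whose twist is the antisymmetrised Euler pairing, which by Riemann--Roch on a surface collapses to $-r_ir_j(\mu_j-\mu_i)\cdot K_S$ (only the $(r,c_1)$ part of the Euler form survives antisymmetrisation, which is why no $\mathrm{ch}_2$ appears in the exponent of $w$; check your ordering convention, since $\chi(\gamma_i,\gamma_j)-\chi(\gamma_j,\gamma_i)=-r_ir_j(\mu_j-\mu_i)\cdot K_S$, not the reverse). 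However, as a proof it is a roadmap rather than an argument. The two steps you yourself flag as the main obstacles --- the telescoping combinatorial identity producing $S$ with its sign $(-1)^k$, and the multiplicativity of the virtual-Poincar\'e-polynomial map on the relevant subalgebra --- are precisely the content of several of Joyce's theorems and are nowhere carried out. The role of $-K_S$ nef should also be made precise: via Serre duality, $\mathrm{Ext}^2(E,F)\cong\mathrm{Hom}(F,E\otimes K_S)^*$ vanishes for the relevant pairs of semistable sheaves, so the stacks of extensions are affine fibrations of the expected dimension and the Hall product descends to the twisted algebra; ``controllability of higher-Ext contributions'' is too vague to certify this. Finally, you pass over a genuine issue: the sum in \eqref{eq:chCI} over decompositions $\gamma=\sum_i\gamma_i$ with $r_i\geq 1$ ranges a priori over infinitely many $(c_{1,i},\mathrm{ch}_{2,i})$, and one must prove that only finitely many terms have both $S\neq 0$ and all $\CI(\gamma_i,w;J)\neq 0$. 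Nothing in your write-up is wrong, but the proposal as it stands defers every nontrivial step to the very machinery it purports to reconstruct.
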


We will restrict the computations in this article to only two rational surfaces,
namely the projective plane $\bP^2$ and its blow-up, the Hirzebruch
surface $\Sigma_1$. Let $C\cong \bP^1$ be the base curve and $f\cong \bP^1$ be the
 fibre of $\Sigma_1$, then $H_2(\Sigma_1,\mathbb{Z})=\mathbb{Z}C\oplus\mathbb{Z}f$, with
 intersection numbers $C^2=-1$, $f^2=0$ and $C\cdot f=1$. 
 The anti-canonical class $K_{\Sigma_1}$ is numerically effective and given by $-K_{\Sigma_1}=2C+3f$. We parametrize the
 closure $\overline{C(S)}$ by:
\be 
\label{eq:polarization}
J_{m,n}=m(C+f)+nf, \qquad m,n\geq 0.\non
\ee
The blow-up of $\bP^2$ be given by $\phi: \Sigma_1 \to \mathbb{P}^2$. The exceptional divisor
of $\phi$ is $C$, and the hyperplane class $H$ of
$\bP^2$ is the pullback $\phi^*(C+f)$.

\section{Generating functions for $\CI(\gamma,w;J)$}
\label{sec:genfunctions}

We now define the two generating functions
$H_{r,c_1}(\tau,z;J)$ and $h_{r,c_1}(\tau,z;J)$ with $\im(\tau) >0$ and $z\in \mathbb{C}\backslash \{ \mathrm{poles}\}$.
As usual, we let $q:=e^{2\pi i\tau}$ and $w:=e^{2\pi i z}$. Then using the expression for $\exp(-\CS(A))$ (\ref{eq:exponent}), we define the following generating function:
\begin{eqnarray}
\CZ(\tau,z;J)&=&\sum_{c_2\in H^4(S,\mathbb{Z}),\atop c_1\in H^2(S,\mathbb{Z})} \bar \Omega(\gamma,w;J)\,e^{-\CS(A)} \non \\
&=&\sum_{c_2\in H^4(S,\mathbb{Z}),\atop c_1\in H^2(S,\mathbb{Z})} \bar \Omega(\gamma,w;J)\,q^{\frac{1}{2r}(c_1)_+^2}\bar q^{r\Delta(\gamma)-\frac{r\chi(S)}{24}-\frac{1}{2r}(c_1)_-^2},
\end{eqnarray}
where $\Delta(\gamma)$ is the discriminant defined by (\ref{eq:discriminant}), $\chi(S)$ the Euler number of the complex surface $S$ and $c_{1\pm}$ are the projections defined in (\ref{eq:plusmin}). Vafa and Witten \cite{Vafa:1994tf} derived that this generating function equals the path integral of topologically twisted Yang-Mills theory with $\CN=4$ supersymmetry and gauge group $U(r)$ (for $w\to -1$). Twisting a sheaf with a line bundle $F\to \mathcal{L}\otimes F$ induces an isomorphism of moduli spaces. Since this twist leaves invariant the discriminant $\Delta$, $\CZ(\tau,z;J)$ can be written as 
\be
\CZ(\tau,z;J)=\sum_{\mu\in H^2(S;\mathbb{Z}/r\mathbb{Z})} \overline{h_{r,\mu}(z,\tau;J)}\,\Theta_{r,\mu}(\tau;J),
\ee
where $h_{r,c_1}(\tau,z;J)$ is defined by:
\be 
h_{r,c_1}(z,\tau;J):=\sum_{c_2} \bar \Omega(\gamma,w;J)\,q^{r\Delta(\gamma)-\frac{r\chi(S)}{24}},\non
\ee
and $\Theta_{r,c_1}(\tau;J)$ is the following theta function
\be
\Theta_{r,\mu}(\tau;J)=\sum_{\bs k\in \mu+ H^2(S,r\mathbb{Z})} q^{\bs k^2_+/2} q^{\bs k^2_-/2}.
\ee
The projections $\bs k_+$ and $\bs k_-$ are as defined in Equation (\ref{eq:plusmin}). With $\Theta_{r,\mu}(\tau;J)$ defined this way, $\CZ(\tau,z;J)$ typically transforms under a congruence subgroup of $SL(2,\mathbb{Z})$. We refer to \cite{Manschot:2011dj} for the details of the theta function, such that the generating function transforms under the full $SL(2,\mathbb{Z})$.

We will in the following consider the generating function of the virtual Poincar\'e polynomials $\CI(\gamma,w;J)$ rather than the rational invariants $\bar \Omega(\gamma,w;J)$. This generating function is defined by
\be
\label{eq:defH}
H_{r,c_1}(z,\tau;J):=\sum_{c_2} \CI(\gamma,w;J)\,q^{r\Delta(\gamma)-\frac{r\chi(S)}{24}}
\ee
Using (\ref{eq:OmI}), $h_{r,c_1}(z,\tau;J)$ can be expressed in terms of $H_{r,c_1}(z,\tau;J)$ and vice versa. In the following we will consider only two surfaces, the Hirzebruch surface $\Sigma_1$ and the projective plane $\bP^2$. We let $H_{r,c_1}(z,\tau;J)$ be the generating function for invariants of $\Sigma_1$ with respect to the polarization $J$, and $H_{r,c_1}(z,\tau;\bP^2)$ the generating function for $\bP^2$ which has no explicit dependence on its polarization. 

Mozgovoy \cite{Mozgovoy:2013zqx} proved using the Hall algebra of $\bP^1$ the conjecture in
\cite{Manschot:2011ym} that the generating functions $H_{r,c_1}(z,\tau;J)$ take a particularly simple form for $J=J_{0,1}=f$. One has:
\be
\label{eq:Hzf}
H_{r,c_1}(z,\tau;J_{0,1})=\left\{ \begin{array}{cl}
   H_{r}(z,\tau),
    & \mathrm{if}\,\,c_1\cdot f=0\mod r,\quad r\geq 1, \\ 0, &
    \mathrm{if}\,\,c_1\cdot f\neq 0\mod r, \quad r>1. \end{array} \right.
\ee
with
$$
H_{r}(z,\tau):=\frac{i\,(-1)^{r-1}\,\eta(\tau)^{2r-3}}{\theta_1(2z,\tau)^2\,\theta_1(4z,\tau)^2\dots\theta_1((2r-2)z,\tau)^2\,\theta_1(2rz,\tau)},
$$
with
\begin{eqnarray}
&&\eta(\tau)\quad \,\,:=q^{\frac{1}{24}}\prod_{n=1}^\infty (1-q^n),\non\\
&&\theta_1(z,\tau):=i\sum_{r\in \mathbb{Z}+\frac{1}{2}} (-1)^{r-\frac{1}{2}} q^{\frac{r^2}{2}}w^{r} \non\\
&& \qquad\quad\,\,\, =i (w^{1\over 2}-w^{-{1\over 2}})\prod_{n\geq 1}(1-q^n)(1-wq^n)(1-w^{-1}q^n). \non
\end{eqnarray}

To determine the generating functions $H_{r,c_1}(z,\tau,\bP^2)$ we use the techniques originally put forward by Yoshioka \cite{Yoshioka:1994}: change of polarization from $J_{0,1}$ to $J_{1,0}$ using Theorem \ref{the:chCI} followed by the blow-up formula. Let $\phi: \Sigma_1\to \bP^2$ be the blow-up map of a point of $\bP^2$, such that $\phi^*c_1\in H^2(\Sigma_1,\mathbb{Z})$ is the pull back of the first Chern class of sheaves on $\bP^2$. Then one has \cite{Yoshioka:1996, Gottsche:1998, Li:1999}: 
\be
\label{eq:blowup}
H_{r,c_1}(z,\tau;\bP^2)=\frac{H_{r,\phi^*c_1-kC}(z,\tau; J_{1,0})}{B_{r,k}(z,\tau)}, \qquad k\in \mathbb{Z}, 
\ee
with
\be
\non
B_{r,k}(z,\tau)=\frac{1}{\eta(\tau)^r} \sum_{\sum_{i=1}^r a_i=0 \atop a_i\in \mathbb{Z}+\frac{k}{r}}q^{-\sum_{i<j}a_ia_j}w^{\sum_{i<j}a_i-a_j}.
\ee
The blow-up formula (\ref{eq:blowup}) implies non-trivial relations between the $H_{r,\widetilde c_1}(z,\tau; J_{1,0})$, since different choices of $\widetilde c_1=\phi^*c_1-kC\in H^2(\Sigma_1,\mathbb{Z})$ can correspond to the same $c_1\in H^2(\mathbb{P}^2,\mathbb{Z})$.

The crucial step to determine $H_{r,c_1}(z,\tau;\bP^2)$ is to obtain a closed form for
$H_{r,c_1}(z,\tau;J_{1,0})$. To this end, we aim to decompose $H_{r,c_1}(z,\tau;J_{1,0})$ into more elementary building blocks.  Substitution of Eq. (\ref{eq:chCI}) in (\ref{eq:defH}) gives for $H_{r,c_1}(z,\tau;J_{1,0})$
\begin{eqnarray}
\label{eq:formgenfunction}
H_{r,c_1}(z,\tau;J_{1,0})&=&\sum_{\rm ch_2} \sum_{\sum_{i=1}^\ell \gamma_i=(r,c_1,\mathrm{ch}_2)} S(\{\gamma_i\},J_{0,1},J_{1,0})\, w^{-\sum_{j<i}r_ir_j(\mu_i-\mu_j)\cdot
  K_{\Sigma_1}}q^{r\Delta(\{\gamma_i\})-\frac{r}{6}} \non \\
&&\times\prod_{i=1}^\ell \CI(\gamma_i,w;J_{0,1}), \non
\end{eqnarray} 
where $\Delta(\{\gamma_i\})$ is the discriminant of a filtration $0\subset
F_1 \subset F_2 \subset \dots \subset F_\ell=F$ of the sheaf $F$,
whose quotients $E_i=F_i/F_{i-1}$ have Chern character
$\gamma_i$. We know from Equation (\ref{eq:Hzf}) that $H_{r,c_1}(z,\tau;J_{0,1})$ vanishes for $c_1\cdot f\neq 0 \mod r$. As a result, we can write $H_{r,c_1}(z,\tau;J_{1,0})$ with $c_1=bC-af$ as
\be
\label{eq:Hrab}
H_{r,bC-af}(z,\tau;J_{1,0})=\sum_{r_1+\dots+r_\ell=r, \, r_i\in \mathbb{N}^* }   \Psi_{(r_1,\dots,r_\ell),(a,b)}(z,\tau)\,\prod_{j=1}^\ell H_{r_j}(z,\tau), 
\ee 
where, for $r_i\in \mathbb{N}^*$ and $a, b\in \mathbb{Z}$, $\Psi_{(r_1,\dots,r_\ell),(a,b)}(z,\tau)$ is defined by
\begin{eqnarray}
\label{eq:defPsi}
&&\Psi_{(r_1,\dots,r_\ell),(a,b)}(z,\tau):=\\
&&\qquad \sum_{\sum_{i=1}^\ell (r_i,c_{1,i})=(r,bC-af)} S(\{\gamma_i\},J_{0,1},J_{1,0})\, w^{-\sum_{j<i}r_ir_j(\mu_i-\mu_j)\cdot
  K_{\Sigma_1}}q^{r\Delta(\{\gamma_i\})-\sum_{i=1}^\ell r_i\Delta(\gamma_i) },\non
\end{eqnarray}
with $\gamma_i=(r_i, c_{1,i}, {\rm ch}_{2,i})$. Note that there is no sum over the second Chern characters ${\rm ch}_{2,i}$ in $\Psi_{(r_1,\dots,r_\ell),(a,b)}(z,\tau)$ since these are captured by the functions $H_{r_i}(z,\tau)$. 
 
As defined in Equation (\ref{eq:defPsi}), $\Psi_{(r_1,\dots,r_\ell),(a,b)}(z,\tau)$ is only convergent for a finite radius of convergence of $w^{-4}$ and $w^{-4}q$. The following proposition shows that part of the sums can be carried out using geometric series, such that  $\Psi_{(r_1,\dots,r_\ell),(a,b)}(z,\tau)$ becomes an analytic function of $z\in \mathbb{C}$ and $\tau \in \mathcal{H}$ away from the poles.
\begin{proposition} 
\label{prop:HrbCaf}
The function $\Psi_{(r_1,\dots,r_\ell), (a,b)}(z,\tau)$, defined in Equation (\ref{eq:defPsi}), is given for any choice of
$r_i\in \mathbb{N}^*$, $i=1,\dots,\ell$, and $a,\,b\in \mathbb{Z}$ by
\begin{eqnarray} 
\label{eq:Psi} 
\Psi_{(r_1,\dots,r_\ell), (a,b)}(z,\tau)&=&\sum_{r_1b_1+\dots+r_\ell b_\ell=b, \atop
  b_i\in \mathbb{Z}} \frac{w^{\sum_{j<i} r_ir_j(b_i-b_j) +2(r_i+r_{i-1})  \left\{ \frac{a}{r} \sum_{k=i}^\ell r_k \right\} }}{\prod_{i=2}^{\ell} \left(1-w^{2(r_{i}+r_{i-1})}q^{b_{i-1}-b_{i}}\right)} \non \\
&& \times\, q^{\sum_{i=1}^\ell \frac{r_i(r-r_i)}{2r}b_i^2 - \frac{1}{r} \sum_{i<j} r_ir_j b_ib_j + \sum_{i=2}^\ell (b_{i-1}-b_{i})\left\{ \frac{a}{r} \sum_{k=i}^\ell r_k \right\}  } , \non
\end{eqnarray} 
where $\{\lambda\}=\lambda-\lfloor \lambda \rfloor$ is the rational part of $\lambda$.\footnote{We hope that the double usage of $\{\}$, for denoting either a rational part or a set, will not lead to confusion.}
\end{proposition}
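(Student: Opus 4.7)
My plan is to parametrise the inner sum in (\ref{eq:defPsi}) by two integers per index $i$, decouple the $a_i$-dependence via Abel summation, and use the sign factor $S$ of Definition~\ref{def:S} to orient each of the resulting geometric sums. First, I would invoke the vanishing condition (\ref{eq:Hzf}) to restrict to $c_{1,i}$ with $c_{1,i}\cdot f\equiv 0\bmod r_i$, writing $c_{1,i}=r_ib_iC+a_if$ with $b_i,a_i\in\Z$; the constraint $\sum_i c_{1,i}=bC-af$ then becomes $\sum_i r_ib_i=b$ and $\sum_i a_i=-a$. Using $C^2=-1$, $Cf=1$, $f^2=0$, $-K_{\Sigma_1}=2C+3f$, and the identity $r\Delta(\gamma)=c_1^2/(2r)-\mathrm{ch}_2$, a direct computation would give
\begin{align*}
-\sum_{j<i}r_ir_j(\mu_i-\mu_j)\cdot K_{\Sigma_1} &= \sum_{j<i}r_ir_j(b_i-b_j)+2\sum_i a_i(R_{<i}-R_{>i}),\\
r\Delta(\{\gamma_i\})-\sum_i r_i\Delta(\gamma_i) &= \sum_i\tfrac{r_i(r-r_i)}{2r}b_i^2-\tfrac{1}{r}\sum_{i<j}r_ir_jb_ib_j-\tfrac{ab}{r}-\sum_i a_ib_i,
\end{align*}
with $R_{<i}:=\sum_{j<i}r_j$ and $R_{>i}:=\sum_{j>i}r_j$.

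Next, introducing the partial sums $A_j:=\sum_{i\leq j}a_i$ (with $A_0=0$ and $A_\ell=-a$), I would apply Abel summation to rewrite the two $a_i$-linear combinations above as $-2a(r-r_\ell)-2\sum_{j=1}^{\ell-1}(r_j+r_{j+1})A_j$ and $ab_\ell+\sum_{j=1}^{\ell-1}(b_{j+1}-b_j)A_j$, so the summand decouples across $j$ as $\prod_{j=1}^{\ell-1}u_j^{A_j}$ with $u_j:=w^{-2(r_j+r_{j+1})}q^{b_{j+1}-b_j}$. A short calculation would show that, for each splitting index $j$, the inequality $\varphi_{J_{1,0}}(\sum_{i\leq j}\gamma_i)>\varphi_{J_{1,0}}(\sum_{i>j}\gamma_i)$ in Definition~\ref{def:S} is equivalent to $B_j>0$, where $B_j:=rA_j+aR_j$. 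Hence $S\neq 0$ demands, for every $j$, either \emph{case (a)} ($b_j\leq b_{j+1}$ with $A_j>-x_j$, $x_j:=(a/r)R_j$) or \emph{case (b)} ($b_j>b_{j+1}$ with $A_j\leq -x_j$), and $S=(-1)^{\#\{j:\mathrm{(a)}\}}$.

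Setting $v_j:=u_j^{-1}=w^{2(r_j+r_{j+1})}q^{b_j-b_{j+1}}$, direct evaluation of the two one-sided geometric sums would yield $-v_j^{\lceil x_j\rceil}/(1-v_j)$ in case (a) and $+v_j^{\lceil x_j\rceil}/(1-v_j)$ in case (b); after multiplication by the $S$-sign, both contributions equal $v_j^{\lceil x_j\rceil}/(1-v_j)$, the two complementary convergence regimes in $(w,q)$-space being related by analytic continuation. Writing $\lceil x_j\rceil=a-\lfloor y_j\rfloor=a-y_j+\{y_j\}$ with $y_j:=(a/r)T_{j+1}$ and $T_i:=\sum_{k=i}^\ell r_k$, the factor $v_j^{a-y_j}$, multiplied over $j$, would combine with the Abel constants $-2a(r-r_\ell)$ and $ab_\ell$ together with the external term $-ab/r$. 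The combinatorial identities $\sum_{j=1}^{\ell-1}(r_j+r_{j+1})T_{j+1}=rT_2$ and $\sum_{j=1}^{\ell-1}(b_j-b_{j+1})T_{j+1}=rb_1-b$ would imply that all these affine pieces in $(a,b)$ cancel identically, leaving only $v_j^{\{y_j\}}$ per $j$; relabelling $i:=j+1$ then reproduces the claimed formula. The main obstacle will be the joint bookkeeping of the sign factor $S$ and the affine $(a,b)$-terms: verifying both that cases (a) and (b) produce the same rational function per $j$, so no case distinction survives, and that the two combinatorial identities above suffice to annihilate the $v_j^{a-y_j}$ pieces, is the core of the argument.
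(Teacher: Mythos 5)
Your proposal is correct and takes essentially the same route as the paper's own proof: your Abel summation in the partial sums $A_j$ is exactly the paper's change of variables $a_i=s_i-s_{i+1}$, the rewriting of $S$ as a product of sign differences and the resulting one-sided geometric sums appear identically, and your two closing combinatorial identities are the same ones the paper proves by induction on $\ell$. The only differences are cosmetic (head versus tail partial sums, the sign convention for the $a_i$, and explicit ceiling/floor bookkeeping in place of the paper's $1+\lfloor\cdot\rfloor$).
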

 
\begin{proof}
The discriminant $\Delta(\{\gamma_i\})$ (\ref{eq:discriminant}) of a filtration is expressed in
terms of Chern characters $\gamma_i=(r_i,c_{1,i},\mathrm{ch}_{2,i})$ of the quotients by:
\be
r\Delta(\{\gamma_i\})=\sum_{i=1}^\ell r_i \Delta(E_i)-\sum_{i=2}^\ell
\frac{1}{2r_i} \frac{1}{\sum_{j=1}^i r_j\sum_{k=1}^{i-1}r_k} \left(
  \sum_{j=1}^{i-1}r_i c_{1,j}-r_j c_{1,i} \right)^2. \non
\ee
Substitution of this expression in Equation (\ref{eq:defPsi}) gives
\begin{eqnarray} 
\label{eq:form2genfunction}
\Psi_{(r_1,\dots,r_\ell), (a,b)}(z,\tau)&=&\sum_{\sum_{i=1}^\ell
  (r_i,c_{1,i})=(r,bC-af), \atop r_i\in \mathbb{N}^*  ,\, i=1,\dots, \ell} S(\{\gamma_i\},J_{0,1},J_{1,0})\, w^{-\sum_{j<i}r_ir_j(\mu_i-\mu_j)\cdot
  K_S} \\ 
&&\times \, q^{-\sum_{i=2}^\ell
\frac{1}{2r_i} \frac{1}{\sum_{j=1}^i r_j\sum_{k=1}^{i-1}r_k} \left(
  \sum_{j=1}^{i-1}r_i c_{1,j}-r_j c_{1,i} \right)^2} \non
\end{eqnarray} 
We parametrize the first Chern classes as $c_1=bC-af$ and $c_{1,i}=r_ib_iC-a_if$, such that the sets $\{a_i\}$ and $\{b_i\}$ have to satisfy
$\sum_{i=1}^\ell a_i=a$ and $\sum_{i=1}^\ell r_ib_i=b$. We continue
with bringing $\Psi_{(r_1,\dots,r_\ell), (a,b)}(z,\tau)$ to a form which allows
to carry out the sums over $a_i$, $i=2,\dots,\ell$ of the different contributions. 
First, after substitution of $c_{1,i}=r_ib_iC-a_i f$ in
$\Delta(\{\gamma_i\})$ and $a_1=a-\sum_{i=2}^{\ell}a_i$ in $\Delta(\{\gamma_i\})$, one obtains: 
\begin{eqnarray}
\label{eq:rDf}
r\Delta(\{\gamma_i\})&=&\sum_{i=1}^\ell r_i\Delta(\gamma_i)+ \sum_{i=2}^\ell \frac{r_i}{2 \sum_{j=1}^i r_j\sum_{k=1}^{i-1}r_k} \left(\sum_{j=1}^{i-1} r_j(b_j-b_i) \right)^2 \non \\
&& + \sum_{i=2}^\ell \frac{1}{\sum_{j=1}^i r_j\sum_{k=1}^{i-1}r_k} \left(\sum_{j=1}^{i-1} r_j(b_i-b_j) \right) \left(\sum_{k=i+1}^\ell r_i a_k + \sum_{k=1}^{i} r_ka_i-r_ia \right).
\end{eqnarray}
We now make the following change of variables:
\be
\label{eq:subsas}
a_i = s_i-s_{i+1}, \quad i=2,\dots,\ell-1, \qquad a_\ell = s_\ell. 
\ee
This transforms the second line of (\ref{eq:rDf}) to:
\be
 \sum_{i=2}^\ell \left(\sum_{j=1}^{i-1} r_j(b_i-b_j) \right)\left( \frac{s_i}{\sum_{j=1}^{i-1}r_j}-\frac{s_{i+1}}{\sum_{j=1}^{i}r_j}-\frac{r_i a}{\sum_{j=1}^i r_j\sum_{k=1}^{i-1}r_k}\right),\non
\ee 
which can be further simplified to:
$$
\sum_{i=2}^\ell (b_i-b_{i-1})s_i -a\sum_{i=2}^\ell \frac{\sum_{m=1}^{i-1}r_ir_m(b_i-b_m)}{\sum_{j=1}^i r_j\sum_{k=1}^{i-1}r_k}.
$$

The exponent of $w$ in Eq. (\ref{eq:defPsi}) is easily evaluated in terms of
$a_i$ and $b_i$:
$$-\sum_{j<i}r_ir_j(\mu_i-\mu_j)\cdot
  K_{\Sigma_1}=\sum_{j<i} r_ir_j(b_i-b_j)-2(r_ja_i-r_ia_j).$$
Replacing $a_1$ as before this becomes:
\be
\label{eq:wpower}
\sum_{j<i} r_ir_j(b_i-b_j) -2\left( 2\sum_{1<j<i}r_ja_i + (r_1+r_j)a_j-(r-r_1)a\right). \non
\ee 
The substitution (\ref{eq:subsas}) then gives:
$$
2(r-r_1)a+\sum_{j<i} r_ir_j(b_i-b_j)-2\sum_{i=2}^\ell(r_{i-1}+r_i)s_i.
$$

Now we come to the third term of the summand in (\ref{eq:form2genfunction}): $S(\{\gamma_i\},J,J')$. Interestingly, this can be written as a
product of differences of signs, which are familiar from the
literature on indefinite theta functions \cite{Gottsche:1996, Zwegers:2000}. To this end, define
\be 
\sgn(x):=\left\{ \begin{array}{rl} 1, & \mathrm{if}\,\,x>0, \\ 0, &
    \mathrm{if}\,\, x=0, \\ -1, & \mathrm{if}\,\, x<0.  \end{array}\right. \non
\ee
Then we can write $S(\{\gamma_i\},J,J')$ as
\begin{eqnarray*}
&&S(\{\gamma_i\},J,J')=\\
&&\qquad \frac{1}{2^{\ell-1}}\prod_{i=2}^{\ell}
\left(\sgn(\varphi_J(\gamma_{i-1})-\varphi_J(\gamma_{i})-v_1)-
  \sgn\!\left(\varphi_{J'}\!\left(\sum_{j=1}^{i-1}\gamma_j\right)-\varphi_{J'}\!\left(\sum_{j=i}^\ell\gamma_{j}\right)-v_1\right)\right), \non
\end{eqnarray*}
where $v_1>0$ is a sufficiently small positive constant such that $0<v_1<|\varphi_J(\gamma)-\varphi_J(\gamma')|$ for each $\gamma$, $\gamma'$ such that  $\varphi_J(\gamma)-\varphi_J(\gamma')\neq 0$. Specializing to $\varphi_J=\varphi^\mu_J$ and substitution of $c_{1,i}=b_ir_iC-a_i f$ gives:
\be
S(\{\gamma_i\};J_{0,1},J_{1,0})=\frac{(-1)^{\ell-1}}{2^{\ell-1}}\prod_{i=2}^{\ell}\left(
  \sgn(b_{i}-b_{i-1}+v_2)+\sgn\!\left(\sum_{j=1}^{i-1}\sum_{k=i}^\ell
    a_kr_j - a_jr_k-v_2\right) \right). \non
\ee
where $0<v_2<1$. Making again the substitution for $a_1$ brings the argument in the second sign to the
form:
$$
\sum_{j=1}^i\sum_{k=i+1}^\ell a_kr_j-a_jr_k=r\sum_{k=i+1}^\ell a_k-a\sum_{k=i+1}^\ell r_k.
$$
With the substitution (\ref{eq:subsas}), this simplifies to:
\be
S(\{\gamma_i\}; J_{0,1}, J_{1,0})=\frac{(-1)^{\ell-1}}{2^{\ell-1}}\prod_{i=2}^{\ell}\left( \sgn(b_{i}-b_{i-1}+v_2)+\sgn\!\left(r s_{i}-a \sum_{k=i}^\ell r_k -v_2 \right)\right), \non
\ee 
We now observe that the sum over the $s_i$'s are simply geometric sums and can be carried out if $z$ is such that $|w^{-4}|<1$ and $|w^4 q|<1$. This brings $\Psi_{(r_1,\dots,r_\ell), (a,b)}(z,\tau)$ to the following form:  
\begin{eqnarray*}
\label{eq:Psi2}
\Psi_{(r_1,\dots,r_\ell), (a,b)}(z,\tau)&:=&\sum_{r_1b_1+\dots+r_\ell b_\ell=b, \atop
  b_i\in \mathbb{Z}} (-1)^{\ell-1}\frac{w^{2(r-r_1)a+\sum_{j<i} r_ir_j(b_i-b_j) -2(r_i+r_{i-1}) \left( 1+ \left\lfloor \frac{a}{r} \sum_{k=i}^\ell r_k \right\rfloor\right)}}{\prod_{i=2}^{\ell} \left(1-w^{-2(r_{i}+r_{i-1})}q^{b_i-b_{i-1}}\right)} \non \\
&& \times\, q^{\sum_{i=2}^\ell \frac{r_i}{2 \sum_{j=1}^i r_j\sum_{k=1}^{i-1}r_k} \left(\sum_{j=1}^{i-1} r_j(b_j-b_i) \right)^2 }  \\
&&\times\, q^{\sum_{i=2}^\ell (b_i-b_{i-1})\left( 1+ \left\lfloor \frac{a}{r} \sum_{k=i}^\ell r_k \right\rfloor\right)  -a \frac{\sum_{j=1}^{i-1}r_ir_j(b_i-b_j)}{\sum_{k=1}^i r_k\sum_{m=1}^{i-1}r_m}}. \non 
\end{eqnarray*}
which can immediately be analytically continued to $z\in \mathbb{C}\backslash \{\mathrm{poles}\}$. To bring it to the simpler form of the Proposition, one proves easily with induction on $\ell$ that: 
\begin{eqnarray}   
&& \sum_{i=2}^\ell (r_i+r_{i-1})\sum_{k=i}^\ell r_k=(r-r_1)r,\non \\ 
&& r\sum_{i=2}^\ell \sum_{j=1}^{i-1} \frac{r_ir_j(b_i-b_j)}{\sum_{k=1}^ir_k\sum_{m=1}^{i-1}r_m}=b-b_1r=\sum_{i=2}^\ell(b_i-b_{i-1})\sum_{k=i}^\ell r_k, \non \\
&& \sum_{i=2}^\ell \frac{r_i}{2 \sum_{j=1}^i r_j\sum_{k=1}^{i-1}r_k} \left(\sum_{j=1}^{i-1} r_j(b_j-b_i) \right)^2=\sum_{i=1}^\ell \sum_{j\neq \ell} \frac{r_ir_j}{2r} \left(b_i^2-2b_ib_j \right) \non
\end{eqnarray}
Substitution of these expressions in Equation (\ref{eq:Psi2}) gives the proposition. Note that it is manifestly invariant under shifts of $(a,b)\to (a,b)+r(k_1,k_2)$ with $k_1,k_2\in \mathbb{Z}$. 
\end{proof}

We note that Proposition \ref{prop:HrbCaf} gives already for $r=3$ much simpler expressions than those in \cite{Kool:2009, weist:2009, Manschot:2010nc, Manschot:2011ym}. The simpler expression allows for a rather quick determination of the invariants. We have verified that the first coefficients of $H_{r,c_1}(z,\tau;\bP^2)$ reproduce those in the previous literature in all known cases. 

We finish this section with an example. We compute the integer invariants $\Omega(\gamma,w;\bP^2)$ of 
sheaves on $\bP^2$ with $(r,c_1)=(4,2H)$. Eq. (\ref{eq:bOmCI})
shows that we need to determine both $H_{2,H}(z,\tau;\bP^2)$ and
$H_{4,2H}(z,\tau;\bP^2)$. For $H_{2,H}(z,\tau;\bP^2)$ we determine first $H_{2, C+f}(z,\tau;J_{1,0})$. The only contributing term of the sum
$\Sigma_{r_1+\dots+r_\ell=2}$ with solutions to $r_1b_1+r_2b_2=1$ in Eq. (\ref{eq:Hrab}) is $(r_1,r_2)=(1,1)$. This gives immediately the result of Yoshioka \cite{Yoshioka:1994}:
\be
\label{eq:2H}
H_{2,H}(z,\tau;\bP^2)=\frac{H_1(z,\tau)^2}{B_{2,0}(z,\tau)}\sum_{k\in
\mathbb{Z}} \frac{w^{-2k+1}q^{k^2+2k+\frac{3}{4}}}{1-w^{4}q^{2k+1}}.
\ee
Alternatively, one can determine $H_{2,H}(z,\tau;\bP^2)$ starting from $H_{2, f}(z,\tau;J_{1,0})$. This gives
\be
\label{eq:2H2}
H_{2,H}(z,\tau;\bP^2)=\frac{H_2(z,\tau)}{B_{2,1}(z,\tau)}+\frac{H_1(z,\tau)^2}{B_{2,1}(z,\tau)}\sum_{k\in
\mathbb{Z}} \frac{w^{-2k+2}q^{k^2+k}}{1-w^{4}q^{2k}}.
\ee 
Section \ref{sec:appell} will explain how the equality of (\ref{eq:2H}) and (\ref{eq:2H2}) follows from a known relation for the classical Appell function.

For $H_{4,2H}(z,\tau;\bP^2)$ the contributing terms in the sum 
$\Sigma_{r_1+\dots+r_\ell=r}$ are $(r_1,\dots,r_\ell)=(3,1)$, $(1,3)$,
$(2,2)$, $(2,1,1)$, $(1,2,1)$, $(1,1,2)$ and $(1,1,1,1)$. The
different $\Psi_{(r_1,\dots,r_\ell),(a,b)}(z,\tau)$ with $(a,b)=(-2,2)$ are given by:
\begin{eqnarray} 
&&\Psi_{(3,1),(a,b)}(z,\tau)=\Psi_{(1,3),(a,b)}= \sum_{k\in\mathbb{Z}} \frac{w^{-12k+10}q^{6k^2-4k+\frac{1}{2}}}{1-w^{8}q^{4k-2}},\non \\
&&\Psi_{(2,2),(a,b)}(z,\tau)= \sum_{k\in\mathbb{Z}} \frac{w^{-8k-4}q^{2k^2+4k+\frac{1}{2}}}{1-w^{8}q^{2k+1}},\non \\
&&\Psi_{(2,1,1),(a,b)}(z,\tau)=\Psi_{(1,1,2),(a,b)}(z,\tau)=\sum_{k_1,k_2\in\mathbb{Z}}\frac{w^{-10k_1-2k_2+8}q^{3k_1^2+2k_1k_2+k_2^2-3k_1-k_2+\frac{1}{2}}}{(1-w^{6}q^{k_1-k_2})(1-w^{4}q^{2k_1+2k_2-2})}, \non \\
&&\Psi_{(1,2,1),(a,b)}(z,\tau)= \sum_{k_1,k_2\in\mathbb{Z}} \frac{w^{-6k_1-6k_2+10}q^{k_1^2+2k_1k_2+3k_2^2-4k_2-1}}{(1-w^{6}q^{k_1-k_2})(1-w^{4}q^{2k_1+2k_2-2})},\non \\
&&\Psi_{(1,1,1,1),(a,b)}(z,\tau)= \sum_{k_1,k_2,k_3\in\mathbb{Z}} \frac{w^{-6k_1-4k_2-2k_3+10}q^{k_1^2+k_2^2+k_3^2+k_1k_2 +k_1k_3+k_2k_3-k_1-2k_2-k_3+\frac{1}{2}}}{(1-w^{4}q^{k_1-k_2})(1-w^{4}q^{k_2-k_3})(1-w^{4}q^{k_1+k_2+2k_3-2})}.\non 
\end{eqnarray}
Summing up these functions as prescribed by Proposition \ref{prop:HrbCaf} one determines $H_{4,2C+2f}(z,\tau;J_{1,0})$. After application of the blow-up formula and using the formulas in Section \ref{sec:genfunctions}, one obtains the generating function of Poincar\'e polynomials $P(\gamma,w)$ for Gieseker semi-stable sheaves on $\bP^2$ with $\gamma=(4,2H,\mathrm{ch}_2)$:
$$
\frac{H_{4,2C+2f}(z,\tau;J_{1,0})}{B_{4,0}(z,\tau)}-\frac{1}{2}H_{2,H}(z,\tau;\bP^2)^2+\frac{1}{2}H_{2,H}(2z,2\tau;\bP^2). 
$$
Poincar\'e polynomials are listed for the first few values of $c_2$ in Table 1. Specializing to Euler numbers, one finds for $c_2=8,9,10, \dots$ the numbers $93726, 505942, 2411826,\dots$.

\begin{table}[h!]
\begin{tabular}{lrrrrrrrrrrrrrrrrr}
$c_2$ & $b_0$ & $b_2$ & $b_4$ & $b_6$ & $b_8$ & $b_{10}$ & $b_{12}$ & $b_{14}$
& $b_{16}$ & $b_{18}$ & $b_{20}$ & $b_{22}$ & $b_{24}$ & $b_{26}$ & $b_{28}$  &  $\chi$ \\
\hline
4 & 1 & 1 & 1 &  &  &  & & & & & & & & & & 6  \\
5 & 1 & 2 & 6 & 10 & 17 & 21 & 24 &  &  & & & & & & & 162 \\
6 & 1 & 2 & 6 & 13 & 27 & 49 & 84 & 126 & 173 & 211 & 231 & & & & & 1846 \\
7 & 1 & 2 & 6 & 13 & 29 & 55 & 107 & 185 & 315 & 493 & 736 & 1008 & 1290 & 1509 & 1634 & 14766 \\
\end{tabular}
\caption{The Betti numbers $b_n$ (with $n\leq
  \dim_\mathbb{C} \mathcal{M}$) and the Euler number $\chi$ of the moduli spaces of semi-stable sheaves
  on $\bP^2$ with $r=4$, $c_1=2C+2f$, and $4\leq c_2\leq 7$.}  
\label{tab:betti30}
\end{table}

\section{Appell functions}
\label{sec:appell}
This section explains that the generating functions $\Psi_{(r_1,\dots,r_\ell),(a,b)}$ can be viewed as major generalization of the classical Appell function. We first review the definition and main properties of the classical Appell function. In Subsection \ref{sec:genappell} we introduce the generalized Appell functions with signature $(n_+,n_-)$. Subsection \ref{sec:example} illustrates that the functions $\Psi_{(r_1,\dots,r_\ell), (a,b)}(z,\tau)$ of Section \ref{sec:genfunctions} are specializations of Appell functions with signature $(\ell-1,\ell-1)$, and discusses a few consequences.

\subsection{The classical Appell function} 
\label{sec:classAppell}
The classical Appell function is defined as \cite{Appell:1886}:
\be
\label{eq:appell}
A(u,v,\tau):=e^{\pi i u}\sum_{n\in \mathbb{Z}} \frac{(-1)^nq^{n(n+1)/2}e^{2\pi i nv}}{1-e^{2\pi i u}q^n}
\ee
which is a meromorphic function of $u\in \mathbb{C}$ with pole for $u\in \mathbb{Z}\tau+\mathbb{Z}$, and holomorphic in $\tau\in
\mathcal{H}$ and $v\in \mathbb{C}$. It is well-known that the transformation properties of $A(u,v,\tau)$ are not exactly those of a modular or Jacobi
form \cite{Semikhatov:2003uc, Zwegers:2000}. However, define the ``completed'' Appell function as:
$$
\widehat A(u,v,\tau):=A(u,v,\tau)+\frac{i}{2}\theta_1(v,\tau)\,R(u-v,\tau)
$$
with
$$
R(u,\tau)=\sum_{n\in \mathbb{Z}+\frac{1}{2}} \left(\,\sgn(n) - E\!\left((n+\mathrm{Im}(u)/y)\sqrt{2y}
  \right) (-1)^{n-\frac{1}{2}}e^{-2\pi i u n}q^{-n^2/2}\right)
$$
and $E(x)=2\int_{0}^x e^{-\pi u^2}du$. Then $\widehat A(u,v,\tau)$ satisfies the following properties \cite{Zwegers:2010}:
\begin{enumerate}
\item Modular transformations: for $\left(\begin{array}{cc} a & b \\ c & d \end{array}\right) \in SL(2,\mathbb{Z})$ 
\be
\widehat A\!\left(\frac{u}{c\tau+d},\frac{v}{c\tau+d},\frac{a\tau+b}{c\tau+d}\right)=(c\tau+d)\, e^{\pi i c(-u^2+2uv)/(c\tau+d)}\widehat A(u,v,\tau). \non
\ee
\item Elliptic transformations: for $k,l,m,n\in \mathbb{Z}$
\be
\widehat A(u+k\tau+l,v+m\tau+n;\tau)=(-1)^{k+m}\,e^{2\pi i (k-m)u-2\pi i k v } q^{k^2/2-km}\widehat{A}(u,v;\tau). \non
\ee 
\item Periodicity relation:
$$
\theta_1(v,\tau)\, A(u+z,v+z,\tau) - \theta_1(v+z,\tau)\, A(u,v,\tau)=\frac{\eta(\tau)^3\,\theta_1(u+v+z,\tau)\,\theta_1(z,\tau)}{\theta_1(u,\tau)\,\theta_1(u+z,\tau)}
$$
\item They can be seen as coefficients of a meromorphic Jacobi form:
$$
\sum_{m\in \mathbb{Z}} A(u+m\tau,v,\tau)\, e^{2\pi i m (z-\frac{1}{2}\tau)-\pi i u}=\frac{ \eta(\tau)^3\,\theta_1(u+z,\tau)\,\theta_1(v-z)}{i\,\theta_1(u,\tau)\,\theta_1(z,\tau)}
$$
\end{enumerate}

\subsection{Appell functions with signature $(n_+,n_-)$}
\label{sec:genappell}
The Appell function and its generalizations (the higher level Appell functions $A_\ell$ \cite{Semikhatov:2003uc} and the multivariable Appell functions $A_Q$ \cite{Kac:2001, Zwegers:2010}) have appeared at various places in the mathematics and mathematical physics literature. As will be explained in more detail in the next subsection, the functions $\Psi_{(r_1,\dots,r_\ell), (a,b)}(z,\tau)$ for $\ell>2$ motivate the introduction of a further generalization of Appell functions. These generalized Appell functions are characterized by their signature $(n_+,n_-)$. They depend on an $n_+$-dimensional lattice $\Lambda \cong \mathbb{Z}^{n_+}$ with positive definite quadratic form $Q({\bs k})={\bs k}^\mathrm{T} {\bf Q}{\bs k}$. The scalar product ${\bs k}\cdot {\bs m}$ denotes as usual $\sum_{i=1}^{n_+} k_i\,m_i$. We have furthermore an $n_+$ by $n_-$ matrix ${\bf M}$ such that the  determinant of
\be
\widetilde {\bf Q}= \left(\begin{array}{cc} {\bf Q} & {\bf M}^\mathrm{T} \\ {\bf M} & {\bf 0}\end{array} \right) \nonumber
\ee 
does not vanish. The ``signature'' of the Appell function can thus be seen as the signature $(n_+,n_-)$ of the above matrix. The column vectors of $\bf M$ are denoted by ${\bs m}_i\in \Lambda^*$, $i=1,\dots,n_-$. In addition, we have a vector ${\bs m}_0\in \Lambda^* \times \mathbb{Q}$, two complex vectors ${\bs u}=(u_1,\dots,u_{n_-})\in \mathbb{C}^{n_-}$ and ${\bs v}=(v_1,\dots,v_{n_+})\in \Lambda^* \otimes \mathbb{C}\simeq \mathbb{C}^{n_+}$, and a constant $R\in \mathbb{Q}$. In terms of this data we define an Appell function of signature $(n_+,n_-)$ as a function of the form:
\be
\label{eq:genAppell} 
A_{Q,\{{\bs m}_i\}}({\bs u},{\bs v},\tau):=e^{2\pi i {\bs m}_0\cdot {\bs u}}\sum_{{\bs
    k}\in \Lambda} \frac{q^{\frac{1}{2}Q({\bs k})+R}e^{2\pi i {\bs
      v}\cdot {\bs k}}}{\prod_{j=1}^{n_-} (1-q^{{\bs m}_j\cdot {\bs
      k}}e^{2\pi i u_j})}.
\ee 
Note that expanding the denominators as a geometric sum will bring $A_{Q,\{{\bs m}_i\}}({\bs u},{\bs v},\tau)$ to the form of an indefinite theta function of a lattice with the quadratic form $\widetilde {\bf Q}$ defined above. Appell functions with signature $(1,1)$ are the classical Appell
functions, possibly of higher level. Appell functions of signature $(n_+,1)$ with $n_+\geq
2$ are the multi-variable Appell functions studied in
\cite{Zwegers:2010}. To my knowledge, functions $A_{Q,\{{\bs m}_i\}}({\bs u},{\bs v},\tau)$ with $n_->1$ have not appeared earlier in the literature.\footnote{After finishing this note, I became aware that functions very similar to (\ref{eq:genAppell}) are described by Kac and Wakimoto in the context of Lie superalgebras \cite[Equation (0.13)]{Kac:2013}. An important difference is that for Lie superalgebras, $\{{\bs m}_i\}$ is a set of pairwise orthogonal vectors, whereas this is typically not the case for the functions of interest in this paper.} 

Analogues of all four properties of the classical Appell function listed above are expected to exist for the Appell functions with general signature. After addition of a suitable completion, the generalized Appell functions are expected to transform as a multivariable Jacobi form with weight $(n_++n_-)/2$ modular form. The modular properties will also fix as usual the values $R$ and ${\bs m}_0$. The analogue of the fourth property is most easily established. We have

\begin{proposition}
Let ${\boldsymbol z}=(z_1,\dots,z_{n_-})$ be a complex vector of length $n_-$
\be
\sum_{ {\boldsymbol l}\in \mathbb{Z}^{n_-} } A_{Q,\{{\bs m}_i\}}({\bs u}+{\boldsymbol l}\tau,{\bs v},\tau) e^{2\pi i {\boldsymbol l} \cdot ({\boldsymbol z}-{\bs m}_0 \tau ) -2\pi i {\bs m_0}u}=\Theta_Q({\bs v}-{\bf M}{\bs z},\tau)\prod_{j=1}^{n_-}\left( \frac{\eta(\tau)^3\,\theta_1(u_j+z_j,\tau)}{i\,\theta_1(u_j,\tau)\,\theta_1(z_j,\tau)} \right),\non
\ee
%
where $\Theta_Q({\bs v},\tau)$ is a theta function for the lattice with quadratic form $Q$:
$$
\Theta_Q({\bs v},\tau)=\sum_{{\bs k}\in \Lambda} q^{\frac{1}{2}Q({\bs k}^2)+R}e^{2\pi i {\bs v}\cdot {\bs k}}.
$$
\end{proposition}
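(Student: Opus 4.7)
The plan is to adapt the standard derivation of property~4 for the classical level-1 Appell function. The crucial observation is that the exponential prefactors multiplying $A_{Q,\{\boldsymbol{m}_i\}}(\boldsymbol{u}+\boldsymbol{l}\tau,\boldsymbol{v},\tau)$ on the left-hand side are engineered to cancel the $\boldsymbol{m}_0$-dependent prefactor inside $A_{Q,\{\boldsymbol{m}_i\}}$ after the elliptic shift $\boldsymbol{u}\to\boldsymbol{u}+\boldsymbol{l}\tau$: explicitly,
\[
e^{2\pi i \boldsymbol{m}_0\cdot(\boldsymbol{u}+\boldsymbol{l}\tau)}\cdot e^{-2\pi i \boldsymbol{m}_0\cdot\boldsymbol{u} - 2\pi i (\boldsymbol{m}_0\cdot\boldsymbol{l})\tau} = 1.
\]
Once this cancellation is noticed, what remains is a genuine double sum over $\boldsymbol{l}\in\mathbb{Z}^{n_-}$ and $\boldsymbol{k}\in\Lambda$ in which all the $\boldsymbol{l}$-dependence sits inside the denominator $\prod_j(1 - q^{\boldsymbol{m}_j\cdot\boldsymbol{k}+l_j}e^{2\pi i u_j})$.

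The next step is to interchange the two summations and to perform the integer translation $l_j \mapsto l_j - \boldsymbol{m}_j\cdot\boldsymbol{k}$, which is legitimate since $\boldsymbol{m}_j\in\Lambda^*$ pairs integrally with $\boldsymbol{k}\in\Lambda$. This translation completely decouples the denominator from $\boldsymbol{k}$ and produces compensating factors $e^{-2\pi i(\boldsymbol{m}_j\cdot\boldsymbol{k})z_j}$ from $e^{2\pi i l_j z_j}$; their product collapses to $e^{-2\pi i (\boldsymbol{M}\boldsymbol{z})\cdot\boldsymbol{k}}$, precisely the shift in the argument of the $\boldsymbol{k}$-theta that is needed. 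The $\boldsymbol{k}$-sum then assembles into $\Theta_Q(\boldsymbol{v}-\boldsymbol{M}\boldsymbol{z},\tau)$, while the $\boldsymbol{l}$-sum factorizes into $n_-$ independent one-dimensional Kronecker--Eisenstein series
\[
\sum_{l\in\mathbb{Z}} \frac{e^{2\pi i l z_j}}{1 - q^l e^{2\pi i u_j}} = \frac{\eta(\tau)^3\,\theta_1(u_j+z_j,\tau)}{i\,\theta_1(u_j,\tau)\,\theta_1(z_j,\tau)}.
\]
This one-dimensional identity is classical and can be cited directly from the literature on elliptic functions, or re-derived in a few lines by expanding the denominator as a geometric series on each half of $\mathbb{Z}$ and reorganizing the resulting double sum into the Jacobi triple product for $\theta_1$. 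Multiplying the theta-function and the $n_-$-fold product together reproduces the right-hand side of the proposition.

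The principal obstacle is convergence and the legitimacy of these rearrangements. The double sum is only conditionally convergent, the one-dimensional Kronecker--Eisenstein identity itself converges only in a strip $0<\mathrm{Im}(z_j)<\mathrm{Im}(\tau)$, and the shift $l_j \mapsto l_j - \boldsymbol{m}_j\cdot\boldsymbol{k}$ together with the interchange of summation order therefore requires care. I would restrict $(\boldsymbol{u},\boldsymbol{v},\boldsymbol{z})$ to a suitable open domain (e.g.\ the above strip for each $z_j$, with $\boldsymbol{u}$ avoiding the zero divisor of $\prod_j \theta_1(u_j,\tau)$), split each $l_j$-sum into its $l_j\geq 0$ and $l_j<0$ parts and expand the corresponding denominators as absolutely convergent geometric series on a half-lattice, carry out all algebraic manipulations term by term within this absolutely convergent regime, and finally extend the resulting identity to the full meromorphic domain by analytic continuation. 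Apart from this convergence bookkeeping, the argument is a purely algebraic extension of the $n_-=1$ case.
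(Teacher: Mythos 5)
Your proposal is correct and follows essentially the same route as the paper's proof: the shift $\boldsymbol{l}\mapsto\boldsymbol{l}-\mathbf{M}\boldsymbol{k}$ to decouple the denominators from $\boldsymbol{k}$, followed by the one-dimensional Kronecker--Eisenstein identity $\sum_{m\in\mathbb{Z}}e^{2\pi i mz}/(1-e^{2\pi iu}q^{m})=\eta(\tau)^{3}\theta_{1}(u+z,\tau)/(\theta_{1}(u,\tau)\theta_{1}(z,\tau))$ applied factor by factor. Your additional remarks on the cancellation of the $\boldsymbol{m}_{0}$-prefactor and on the convergence domain justifying the interchange of sums are sensible elaborations of steps the paper leaves implicit.
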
  

\begin{proof}
The proof follows almost immediately from the change of the summation variables ${\bs l}\to {\bs l}-{\bf M}\cdot {\bs k}$, and application of the identity:
 $$
 \sum_{m\in \mathbb{Z}}\frac{e^{2\pi i mz}}{1-e^{2\pi i u}q^m}=\frac{\eta(\tau)^3\,\theta_1(u+z,\tau)}{\theta_1(u,\tau)\,\theta_1(z,\tau)}.
 $$
\end{proof}

\subsection{Generating functions $H_{r,c_1}(z,\tau)$ and Appell functions of signature $(\ell,\ell)$}
\label{sec:example}
In this subsection, we relate the generating functions $H_{r,c_1}(z,\tau)$ and the Appell functions with a general signature, and discuss consequences based on the blow-up formula. Clearly, the functions
$\Psi_{(r_1,\dots,r_\ell),(a,b)}(z,\tau)$ in Section \ref{sec:genfunctions}  are specializations of a generalized Appell function with signature $(\ell-1,\ell-1)$. If $r_i=1, \forall i=1,\dots,\ell$, the corresponding $(\ell-1)$-dimensional quadratic form corresponds to the one of the $\operatorname{A}_{\ell-1}$ root lattice. We define the quadratic form $\mathrm{Q}_{r}$ of the $\operatorname{A}_r$ root lattice by
\be
\mathrm{Q}_r:=\left(\begin{array}{ccccc} 2 & 1 & 1 & $\dots$ & 1 \\ 1 & 2 & 1 & $\dots$ & 1 \\ $\vdots$ & $\vdots$ & $\vdots$ & $\vdots$ & $\vdots$ \\ 1  & 1 & 1 & $\dots$ & 2\end{array}\right). 
\ee

For $r=2$, explicit expressions for the $H_{2,H}(z,\tau;\mathbb{P}^2)$ are given in Equations (\ref{eq:2H}) and (\ref{eq:2H2}). The $\Psi_{(1,1),(a,b)}$ are in this case specializations of the classical Appell function. For example, the Appell function in Equation (\ref{eq:2H}) can be written as $A(4z+\tau,-2z+\tau+\frac{1}{2},2\tau)$ (up to a term of the form $q^\alpha w^\beta$). Writing also the Appell function in Equation (\ref{eq:2H2}) in terms of a specialization of $A(u,v,\tau)$, one can easily prove the equality of (\ref{eq:2H}) and (\ref{eq:2H2}) using the periodicity relation of $A(u,v,\tau)$ given in Section \ref{sec:classAppell}.

For $r=3$, the explicit expressions for the $H_{3,c_1}(z,\tau;J_{1,0})$ are listed in the appendix. One verifies straightforwardly that the $\Psi_{(1,1,1),(a,b)}$ are specializations of the following $\operatorname{A}_2$ Appell function
$$
A_{Q_{2}}\!({\bs u},{\bs v},\tau)=\sum_{k_1,k_2\in \mathbb{Z}}\frac{q^{k_1^2+k_2^2+k_1k_2}e^{2\pi i v_1(2k_1+k_2)+2\pi i v_2(k_2-k_1)}}{(1-e^{2\pi i u_1}q^{2k_1+k_2})(1-e^{2\pi i u_2}q^{k_2-k_1})}, 
$$
with ${\bs u}=(u_1,u_2)$, and ${\bs v}=(v_1,v_2)$.
The blow-up formula (\ref{eq:blowup}) gives in this case the following relations among the $H_{3,c_1}(z,\tau;J_{1,0})$:
\begin{eqnarray} 
\label{eq:blowup3}
\frac{H_{3,f}(z,\tau;J_{1,0})}{B_{3,1}(z,\tau)}&=&\frac{H_{3,-C+f}(z,\tau;J_{1,0})}{B_{3,1}(z,\tau)}=\frac{H_{3,C+f}(z,\tau;J_{1,0})}{B_{3,0}(z,\tau)},\\
\frac{H_{3,0}(z,\tau;J_{1,0})}{B_{3,0}(z,\tau)}&=&\frac{H_{3,C}(z,\tau;J_{1,0})}{B_{3,1}(z,\tau)}, \non
\end{eqnarray}
which clearly imply very intricate relations among the $\Psi_{(1,1,1),(a,b)}$.\footnote{Using the new expressions for $H_{3,c_1}(z,\tau)$, the identities (\ref{eq:blowup3}) are proven in Reference \cite{Bringmann:2015} by making only use of analytic properties of the $q$-series. The first equality sign on the first line of (\ref{eq:blowup3}) corresponds to \cite[Theorem 1.1]{Bringmann:2015}, the second equality sign to \cite[Theorem 1.2]{Bringmann:2015}, and the equality on the second line to \cite[Theorem 1.3]{Bringmann:2015}. The proofs confirm that, at least for $r\leq 3$, the computation of $H_{r,c_1}(z,\tau;\mathbb{P}^2)$ using the functions $H_{r,\widetilde c_1}(z,\tau;J_{0,1})$, wall-crossing and blow-up formula, lead to identical generating functions, independent of the choice of $\widetilde c_1=\phi^*c_1-kC$.}

The identities (\ref{eq:blowup3}) also provide non-trivial information about the zeros and poles of $H_{3,c_1}(z,\tau)$. For example, one can show using techniques from \cite{Eichler:1985} that the function $B_{3,1}(z,\tau)$ has zeroes at torsion points: $z_0=\frac{n}{3}$, $n=1,2\mod 3$, whereas $B_{3,0}(z,\tau)$ has zeros for  $z_0=\pm ({1\over 2} \tau+{1\over 4}+\nu(\tau))$ where
 \begin{eqnarray} 
\nu(\tau)&=&-2\pi\int_{\tau}^{i\infty}
   (t-\tau) F(t)dt\\
&=&  \frac{q^{1\over 2}}{2\pi}\left(1-{11\over 6}q+{243 \over 40}q^2+\dots\right), \non
 \end{eqnarray}
with $F(\tau)=\frac{\eta(\tau)^{12}\,\eta(3\tau)^{6}}{(q^{1/2}\,\prod_{\lambda,\mu=0,1} \eta(\tau)^3\,B_{3,0}(\tau,\lambda\tau/4+\mu/4))^{3/2}}$. It is not hard to verify that both $H_{3,f}(z,\tau;J_{1,0})$ and $H_{3,-C+f}(z,\tau;J_{1,0})$ do not have poles at points where $B_{3,0}(z,\tau)$ vanishes. Therefore, $H_{3,C+f}(z,\tau;J_{1,0})$ must necessarily vanish at the same points where $B_{3,0}(z,\tau)$ vanishes. One can verify order by order that this is indeed the case.

As a final illustration, we express $\Psi_{(1,1,1,1),(-2,2)}(z,\tau)$ as a generalized Appell function of the form (\ref{eq:genAppell}). The corresponding quadratic form is ${\rm Q}_{3}$, and the vectors ${\bs m}_i$, $i=0,\dots,3$ are given by: 
$$
{\bs m}_0=\frac{1}{2}(1,0,0),\quad {\bs m}_1=(1,-1,0), \quad {\bs m}_2=(0,1,-1),\quad {\bs m}_3=(1,1,2) ,
$$
${\bs u}=-4z(1,1,1)+2\tau(0,0,1)$, ${\bs v}=(6,4,2)z+(3,3,2)\tau$, and $R=\frac{5}{2}$.

\appendix

\section{Explicit expressions for $H_{3,c_1}(z,\tau;J_{1,0})$}
 
We list here explicit expressions for the functions $H_{3,c_1}(z,\tau;J_{1,0})$. For any $J\in C(S)$, the functions $H_{3,c_1}(z,\tau;J)$ are well-known to satisfy 
$$H_{3,c_1}(z,\tau;J)=H_{3,-c_1}(z,\tau;J)=H_{3,c_1+k}(z,\tau;J),$$
with $k\in H^2(\Sigma_1,3\mathbb{Z})$. As a result, there are only five different functions $H_{3,c_1}(z,\tau;J_{1,0})$. Using the results of Section \ref{sec:genfunctions}, one obtains for these
\begin{eqnarray}
H_{3,f}(z,\tau;J_{1,0})&=&H_{1}(z,\tau) H_{2}(z,\tau)\left( \sum_{k\in \mathbb{Z}}
  \frac{w^{-6k+4}q^{3k^2+2k}}{1-w^{6}q^{3k}}+ \sum_{k\in \mathbb{Z}}
  \frac{w^{-6k+2}q^{3k^2+k}}{1-w^{6}q^{3k}} \right) \non \\
&&+H_{1}(z,\tau)^3\sum_{k_1,k_2\in \mathbb{Z}}\frac{w^{-2(k_1+2k_2-2)}q^{k_1^2+k_2^2+k_1k_2+k_1+k_2}}{(1-w^{4}q^{2k_1+k_2})(1-w^{4}q^{k_2-k_1})}+H_{3}(z,\tau), \non
\end{eqnarray}
\begin{eqnarray}  
H_{3,-C+f}(z,\tau;J_{1,0})&=&H_{1}(z,\tau) H_{2}(z,\tau) \left( \sum_{k\in \mathbb{Z}}
\frac{w^{-6k+2}q^{3k^2+4k+1}}{1-w^{6}q^{3k+1}}+ \sum_{k\in \mathbb{Z}} \frac{w^{-6k+4}q^{3k^2-k}}{1-w^{6}q^{3k-1}}\right) \non \\
&&+ \, H_{1}(z,\tau)^3\sum_{k_1,k_2\in \mathbb{Z}}\frac{w^{-2(k_1+2k_2-1)}q^{k_1^2+k_2^2+k_1k_2+2k_1+2k_2+1}}{(1-w^{4}q^{2k_1+k_2+1})(1-w^{4}q^{k_2-k_1})}, \non
\end{eqnarray} 
\begin{eqnarray}
H_{3,C+f}(z,\tau;J_{1,0})&=&H_{1}(z,\tau) H_{2}(z,\tau) \left( \sum_{k\in \mathbb{Z}}
\frac{w^{-6k+6}q^{3k^2-\frac{1}{3}}}{1-w^{6}q^{3k-1}}+ \sum_{k\in \mathbb{Z}} \frac{w^{-6k}q^{3k^2+3k+\frac{2}{3}}}{1-w^{6}q^{3k+1}}\right) \non \\
&&+ \, H_{1}(z,\tau)^3\sum_{k_1,k_2\in \mathbb{Z}}\frac{w^{-2(k_1+2k_2-3)}q^{k_1^2+k_2^2+k_1k_2-\frac{1}{3}}}{(1-w^{4}q^{2k_1+k_2-1})(1-w^{4}q^{k_2-k_1})}, \non
\end{eqnarray} 
\begin{eqnarray}
H_{3,0}(z,\tau;J_{1,0})&=&2H_{1}(z,\tau) H_{2}(z,\tau)\left( \sum_{k\in \mathbb{Z}}  \frac{w^{-6k}q^{3k^2}}{1-w^{6}q^{3k}} \right)\non \\
&&+H_{1}(z,\tau)^3\sum_{k_1,k_2\in \mathbb{Z}}\frac{w^{-2(k_1+2k_2)}q^{k_1^2+k_2^2+k_1k_2}}{(1-w^{4}q^{2k_1+k_2})(1-w^{4}q^{k_2-k_1})}+H_{3}(z,\tau), \non
\end{eqnarray}
\begin{eqnarray}
H_{3,C}(z,\tau;J_{1,0})&=&H_{1}(z,\tau) H_{2}(z,\tau)\left( \sum_{k\in \mathbb{Z}}  \frac{w^{-6k+2}q^{3k^2-2k+\frac{1}{3}}}{1-w^{6}q^{3k-1}}+\sum_{k\in \mathbb{Z}}  \frac{w^{-6k-2}q^{3k^2+2k+\frac{1}{3}}}{1-w^{6}q^{3k+1}}  \right)\non \\
&&+H_{1}(z,\tau)^3\sum_{k_1,k_2\in \mathbb{Z}}\frac{w^{-2(k_1+2k_2+1)}q^{k_1^2+k_2^2+k_1k_2+k_1+k_2+\frac{1}{3}}}{(1-w^{4}q^{2k_1+k_2+1})(1-w^{4}q^{k_2-k_1})}. \non 
\end{eqnarray}

\providecommand{\href}[2]{#2}\begingroup\raggedright


\begin{thebibliography}{1}

\bibitem{Alexandrov:2012au}
  S.~Alexandrov, J.~Manschot and B.~Pioline,
  {\it D3-instantons, Mock Theta Series and Twistors},
  JHEP {\bf 1304} (2013) 002
  [arXiv:1207.1109 [hep-th]].

\bibitem{Alexandrov:2014wca}
  S.~Alexandrov, G.~W.~Moore, A.~Neitzke and B.~Pioline,
  {\it $\mathbb R^3$ Index for Four-Dimensional $N=2$ Field Theories,}
  Phys.\ Rev.\ Lett.\  {\bf 114} (2015) 121601
  [arXiv:1406.2360 [hep-th]].

\bibitem{Appell:1886}
M.~P.~Appell, {\it Sur les fonctions doublement p\'eriodique de
  troisi\`eme esp\`ece}, Annales scientifiques  de l'E.N.S., (1886) 9-42.

\bibitem{Bachas:1999um}
  C.~P.~Bachas, P.~Bain and M.~B.~Green,
{\it Curvature terms in D-brane actions and their M theory origin,}
  JHEP {\bf 9905} (1999) 011
  doi:10.1088/1126-6708/1999/05/011
  [hep-th/9903210].
 
\bibitem{Bringmann:2010sd}
  K.~Bringmann and J.~Manschot,
  {\it From sheaves on $\mathbb{P}^2$ to a generalization of the Rademacher expansion}, Am. J. of Math. {\bf 135} (2013), 1039 - 1065
  [arXiv:1006.0915 [math.NT]].

\bibitem{Bringmann:2015}
K.~Bringmann, J.~Manschot and L.~Rolen, {\it Identities for generalized Appell functions and the blow-up formula}, (2015)
 
\bibitem{Cheng:2010pq}
  M.~C.~N.~Cheng, 
  {\it K3 Surfaces, N=4 Dyons, and the Mathieu Group M24},
  Commun.\ Num.\ Theor.\ Phys.\  {\bf 4} (2010) 623
  [arXiv:1005.5415 [hep-th]].

\bibitem{Cheng:2012uy}
  M.~C.~N.~Cheng and J.~F.~R.~Duncan,
  {\it The Largest Mathieu Group and (Mock) Automorphic Forms},  Proc.\ Symp.\ Pure Math.\  {\bf 85}, 53 (2012)
  arXiv:1201.4140 [math.RT].
 
\bibitem{Dabholkar:2012nd}
  A.~Dabholkar, S.~Murthy, and D.~Zagier,
  {\it Quantum Black Holes, Wall Crossing, and Mock Modular Forms,}
  arXiv:1208.4074 [hep-th].

\bibitem{Diaconescu:2007bf}
  E.~Diaconescu and G.~W.~Moore,
  {\it Crossing the wall: Branes versus bundles},
  Adv.\ Theor.\ Math.\ Phys.\  {\bf 14} (2010)
  [arXiv:0706.3193 [hep-th]].

\bibitem{Donaldson:1987} 
S.~K.~Donaldson, {\it Infinite determinants, stable bundles and
curvature}, Duke Math. J. {\bf 54} (1987) 231-247.

\bibitem{Eguchi:1988af}
  T.~Eguchi, A.~Taormina,
  {\it On the unitary representations of N=2 and N=4 superconformal algebras},
  Phys.\ Lett.\  {\bf B210}, 125 (1988).

\bibitem{Eichler:1985}
 M.~Eichler and D. Zagier, ``The Theory of Jacobi Forms,''
 Birkh\"auser (1985).

\bibitem{Gadde:2013sca}
  A.~Gadde, S.~Gukov and P.~Putrov,
{\it Fivebranes and 4-manifolds},
  arXiv:1306.4320 [hep-th].

\bibitem{Gottsche:1990}
L.~G\"ottsche, {\it The Betti numbers of the Hilbert scheme of points
  on a smooth projective surface}, Math.\ Ann. {\bf 286} (1990) 193.

\bibitem{Gottsche:1996}
  L.~G\"ottsche, D.~Zagier
  {\it Jacobi forms and the structure of Donaldson invariants for 4-manifolds with $b_+=1$},
Selecta \ Math., New Ser. {\bf 4} (1998) 69.   
  [arXiv:alg-geom/9612020].

\bibitem{Gottsche:1998}
  L.~G\"ottsche,
  {\it Theta functions and Hodge numbers of moduli spaces of sheaves on rational surfaces},
  Comm.\ Math.\ Physics \ {\bf 206} (1999) 105
  [arXiv:math.AG/9808007].

\bibitem{Haghighat:2011xx}
  B.~Haghighat and S.~Vandoren,
  {\it Five-dimensional gauge theory and compactification on a torus},
  JHEP {\bf 1109} (2011) 060
  [arXiv:1107.2847 [hep-th]].

\bibitem{Joyce:2004}
  D.~Joyce, {\it Configurations in Abelian categories. IV. Invariants
    and changing stability conditions}, Adv. in Math. {\bf 217} (2008) 125-204,
  [arXiv:math.AG/0410268].

\bibitem{Joyce:2008}
  D.~Joyce and Y.~Song, {\it A theory of generalized Donaldson-Thomas invariants},
  arXiv:0810.5645 [math.AG].

\bibitem{Kac:2001}
V.~G.~Kac and M. Wakimoto, {\it Integrable highest weight modules over affine superalgebras and Appell's function},
Commun.\ Math.\ Phys. {\bf 215} (2001) 631-682.

\bibitem{Kac:2013}
V.~G.~Kac and M. Wakimoto, {\it Representations of affine superalgebra's and mock theta functions}, arXiv:1308.1261 [math.RT].

\bibitem{Katz:2014}
S.~Katz, A.~Klemm, R.~Pandharipande, {\it On the motivic stable pair invariants of K3 surfaces},   arXiv:1407.3181 [math.AG].

\bibitem{Klemm:2012sx} 
  A.~Klemm, J.~Manschot and T.~Wotschke,
  {\it Quantum geometry of elliptic Calabi-Yau manifolds},  Comm. in Number Theory and Physics {\bf 6} (2012) 849-917,
  arXiv:1205.1795 [hep-th].

\bibitem{Klyachko:1991}
A.~Klyachko, {\it Moduli of vector bundles and numbers of classes},
Funct. Anal. and Appl. {\bf 25} (1991), 67--68.

\bibitem{Kontsevich:2008}
  M.~Kontsevich and Y.~Soibelman,
  {\it Stability structures, motivic Donaldson-Thomas invariants and cluster transformations},
   [arXiv:0811.2435 [math.AG]].

\bibitem{Kool:2009}
  M.~Kool,
  {\it Euler characteristics of moduli spaces of torsion free sheaves on toric surfaces}, Geom. Dedicata (2014)
  [arXiv:0906.3393 [math.AG]]. 

\bibitem{Li:1999}
  W.-P.~Li and Z. Qin,
  {\it On blowup formulae for the $S$-duality conjecture of Vafa and Witten,}
  Invent. Math. \ {\bf 136} (1999) 451-482
  [arXiv:math.AG/9808007].

\bibitem{Maldacena:1997de} 
  J.~M.~Maldacena, A.~Strominger and E.~Witten,
  {\it Black hole entropy in M theory},
  JHEP {\bf 9712}, 002 (1997)
  [hep-th/9711053]. 

\bibitem{Manschot:2007ha}
  J.~Manschot and G.~W.~Moore,
 {\it A Modern Farey Tail},
  Commun.\ Num.\ Theor.\ Phys.\  {\bf 4} (2010) 103
  [arXiv:0712.0573 [hep-th]].

\bibitem{Manschot:2009ia}
  J.~Manschot,
  {\it Stability and duality in N=2 supergravity},
  Commun.\ Math.\ Phys.\  {\bf 299} (2010) 651
  [arXiv:0906.1767 [hep-th]].

\bibitem{Manschot:2010nc}
  J.~Manschot,
  {\it The Betti numbers of the moduli space of stable sheaves of rank 3 on $\mathbb{P}^2$,} Lett. in Math. Phys. {\bf 98} (2011) 65-78,
  arXiv:1009.1775 [math-ph].

\bibitem{Manschot:2011dj}
  J.~Manschot,
  {\it BPS invariants of N=4 gauge theory on a surface}, Comm. in Number Theory and Physics {\bf 6} (2012) 497-516, 
  arXiv:1103.0012 [math-ph].

\bibitem{Manschot:2011ym}
  J.~Manschot,
  {\it BPS invariants of semi-stable sheaves on rational surfaces},
  Lett.\  Math.\  Phys.\  {\bf 103} (2013) 895
  [arXiv:1109.4861 [math-ph]].
 
\bibitem{Minasian:1999qn}
  R.~Minasian, G.~W.~Moore and D.~Tsimpis,
  {\it Calabi-Yau black holes and (0,4) sigma models},
  Commun.\ Math.\ Phys.\  {\bf 209} (2000) 325
  [arXiv:hep-th/9904217].

\bibitem{Mozgovoy:2013zqx}
  S.~Mozgovoy,
{\it Invariants of moduli spaces of stable sheaves on ruled surfaces},
  arXiv:1302.4134 [math.AG].

\bibitem{Minahan:1998vr}
  J.~A.~Minahan, D.~Nemeschansky, C.~Vafa and N.~P.~Warner,
  {\it E-strings and $\CN = 4$ topological Yang-Mills theories},
  Nucl.\ Phys.\  B {\bf 527} (1998) 581
  [arXiv:hep-th/9802168].

\bibitem{Polishchuk:1998}
A.~Polishchuk, {\it M. P. Appell's function and vector bundles of rank 2 on elliptic curves},
The Ramanujan Journal {\bf 5} (2001) 111 - 128
arXiv:math/9810084 [math.AG]

\bibitem{Semikhatov:2003uc}
  A.~M.~Semikhatov, A.~Taormina, I.~Y.~Tipunin,
  {\it Higher level Appell functions, modular transformations, and characters},
  [math/0311314 [math-qa]].

\bibitem{Toda:2014}
Y.~Toda, {\it Generalized Donaldson-Thomas invariants on the local projective plane}, [arXiv:1405.3366]
 
\bibitem{Troost:2010ud}
  J.~Troost,
  {\it The non-compact elliptic genus: mock or modular},
  JHEP {\bf 1006} (2010) 104
  [arXiv:1004.3649 [hep-th]].

\bibitem{Uhlenbeck:1986}
K.~Uhlenbeck and S.~T.~Yau, {\it On the existence of Hermitian Yang-Mills
connections in stable bundles}, Comm. Pure Appl. Math. {\bf 39}
(1986) 257-293.

\bibitem{Verlinde:1995mz}
  E.~P.~Verlinde,
  {\it Global aspects of electric - magnetic duality,}
  Nucl.\ Phys.\ B {\bf 455} (1995) 211 
  doi:10.1016/0550-3213(95)00431-Q
  [hep-th/9506011].
 
\bibitem{Vafa:1994tf}
  C.~Vafa and E.~Witten,
  {\it A strong coupling test of S duality,}
  Nucl.\ Phys.\  B {\bf 431} (1994) 3
  [arXiv:hep-th/9408074].

\bibitem{weist:2009}
T.~Weist, {\it Torus fixed points of moduli spaces of stable bundles
  of rank three}, J. of Pure and Applied Algebra {\bf 215} (2011), arXiv:0903.0732 [math. AG].

\bibitem{work}
Work in progress.
 
\bibitem{Yoshioka:1994}
K.~Yoshioka, {\it The Betti numbers of the moduli space of stable sheaves of
  rank 2 on $\mathbb{P}^2$},  { J. reine. angew. Math.} {\bf 453} (1994)
  193--220.

\bibitem{Yoshioka:1995}
K.~Yoshioka, {\it The Betti numbers of the moduli space of stable
  sheaves of rank 2 on a ruled surface},  { Math. Ann.} {\bf 302} (1995)
  519--540.

\bibitem{Yoshioka:1996}
K.~Yoshioka, {\it The chamber structure of polarizations and the
  moduli of stable sheaves on a ruled surface},  { Int. J. of Math.} {\bf 7} (1996)
  411--431 [arXiv:alg-geom/9409008].

\bibitem{Zagier:1975}
  D.~Zagier,
  {\it Nombres de classes et formes modulaires de poids 3/2, }
  C.R. \ Acad. \ Sc. \  Paris, {\bf 281} (1975) 883.

\bibitem{Zwegers:2000}
  S.~P.~Zwegers,
  ``Mock Theta Functions,'' Dissertation, University of Utrecht (2002)

\bibitem{Zwegers:2010}
  S.~P.~Zwegers,
  ``Multivariable Appell functions,''  (2010). 


\end{thebibliography}
\end{document}